\documentclass{amsart}
\usepackage{amsmath}
\usepackage{amssymb, amsthm, amsfonts, tikz-cd, mathrsfs,mathtools,stmaryrd,enumitem, algorithmic,float,comment,tikz}
\usetikzlibrary{backgrounds}
\usetikzlibrary{decorations.pathreplacing}
\usepackage{fullpage}
\usepackage{xcolor}
\usepackage[ruled,vlined,linesnumbered]{algorithm2e}

\usepackage[colorlinks,citecolor=blue,linkcolor=blue,urlcolor=red]{hyperref}

\usepackage[capitalize,nameinlink]{cleveref}
\crefname{theorem}{Theorem}{Theorems}
\crefname{lemma}{Lemma}{Lemmas}
\crefname{claim}{Claim}{Claims}
\crefname{prop}{Proposition}{Propositions}

\newtheorem{theorem}{Theorem}[section]
\newtheorem{lemma}[theorem]{Lemma}
\newtheorem{prop}[theorem]{Proposition}
\newtheorem{cor}[theorem]{Corollary}

\newtheorem{definition}[theorem]{Definition}

\newtheorem{claim}[theorem]{Claim}

\crefalias{step}{enumi}
\crefname{step}{Step}{Steps}

\crefalias{property}{enumi}
\crefname{property}{Property}{Properties}
\crefname{figure}{Figure}{Figures}

\newcommand{\R}{\mathbb{R}}

\newcommand{\E}{\mathbb{E}}

\newcommand{\lpr}[1]{\left(#1\right)}
\newcommand{\lbr}[1]{\left[#1\right]}

\newcommand{\ceil}[1]{\lceil#1\rceil}

\newcommand{\ang}[1]{\langle#1\rangle}
\newcommand{\labs}[1]{\left|#1\right|}

\newcommand{\eps}{\varepsilon}
\newcommand{\supp}{\operatorname{supp}}
\newcommand{\signbin}{\{\pm 1\}}

\usepackage[
backend=biber,
style=numeric,
maxnames=99
]{biblatex}
\renewbibmacro{in:}{%
  \ifentrytype{article}{}{\printtext{\bibstring{in}\intitlepunct}}}

\addbibresource{ref.bib}

\title{Essential Covers of the Hypercube require many Hyperplanes}

\author{Lisa Sauermann}
\thanks{University of Bonn. \texttt{sauermann@iam.uni-bonn.de}. Research supported in part by NSF Award DMS-2100157 and a Sloan Research Fellowship.}

\author{Zixuan Xu}
\thanks{Massachusetts Institute of Technology. \texttt{zixuanxu@mit.edu}}

\begin{document}

\begin{abstract}
We prove a new lower bound for the almost 20 year old problem of determining the smallest possible size of an essential cover of the $n$-dimensional hypercube $\{\pm 1\}^n$, i.e.\ the smallest possible size of a collection of hyperplanes that forms a minimal cover of $\{\pm 1\}^n$ and such that furthermore every variable appears with a non-zero coefficient in at least one of the hyperplane equations. We show that such an essential cover must consist of at least $10^{-2}\cdot n^{2/3}/(\log n)^{2/3}$ hyperplanes, improving previous lower bounds of Linial--Radhakrishnan, of Yehuda--Yehudayoff and of Araujo--Balogh--Mattos.
\end{abstract}

\maketitle

\section{Introduction}

There is a long line of research, spanning over three decades, on problems about covering the vertices of the $n$-dimensional hypercube $\signbin^n$ by hyperplanes (see e.g. \cite{alon-furedi-93,araujo2022,clifton-huang-20,klein2022slicing,LinialR05,YehudaY21}). A very simple question is how many hyperplanes are needed in order to cover all vertices in $\signbin^n$, i.e.\ such that every vertex is contained in at least one of these hyperplanes. This question has an equally simple answer: two hyperplanes are enough, for example one can take the two hyperplanes given by the equations $x_1=1$ and $x_1=-1$. However, this hyperplane cover is not truly $n$-dimensional, in the sense that the variables $x_2,\dots,x_n$ do not appear in any of the hyperplane equations. It is therefore natural to demand in addition that each variable has a non-zero coefficient in the equation for at least one of the hyperplanes.

However, this still does not lead to a very interesting problem. In addition to the hyperplanes with equations $x_1=1$ and $x_1=-1$ that already cover all vertices, one could add an additional hyperplane with an equation like $x_1+x_2+\dots+x_n=0$ containing all variables with non-zero coefficients. But in some sense, this additional hyperplane is not truly part of the hyperplane cover, as it is not needed for covering all vertices (the other two hyperplanes together already cover all vertices in $\signbin^n$).

This leads to the following notion of an \emph{essential cover} of the hypercube $\signbin^n$, introduced by Linial and Radhakrishnan \cite{LinialR05} in 2005. It describes a minimal set of hyperplanes covering all vertices in $\signbin^n$, which in addition has the property that each variable appears in the equation of at least one of the hyperplanes. Formally, an essential cover of the hypercube $\signbin^n$ is defined as follows.

\begin{definition}[Essential cover]\label{def:essential-cover}
A collection of hyperplanes  $h_1,\dots,h_k$ in $\mathbb{R}^n$  is called an \emph{essential cover} of the $n$-dimensional hypercube $\signbin^n$ if the following conditions are satisfied:
\begin{enumerate}[label=(E{{\arabic*}})]
    \item For every vertex $x\in \signbin^n$, there is some $i\in [k]$ such that $x\in h_i$.
    \item For every $j\in [n]$, there is some $i\in [k]$ such that, writing $h_i=\{x\in \R^n \mid \ang{v_i,x} = \mu_i\}$, we have $v_{ij}\ne 0$. 
    \item For every $i\in [k]$, there is a vertex $x\in \signbin^n$ that is covered only by $h_i$ and not by any of the the hyperplanes $h_1,\dots,h_{i-1},h_{i+1},\dots,h_n$.
\end{enumerate}
\end{definition}

Condition (E1) means that every vertex in $\signbin^n$ is covered by some hyperplane, and condition (E3) means that $h_1,\dots,h_k$ is a minimal collection of hyperplanes with respect to the property of covering every vertex. Finally, condition (E2) can be restated as saying that each of the variables $x_1,\dots,x_n$ must appear with a non-zero coefficient in at least one of the hyperplane equations $\ang{v_i,x} = \mu_i$ for $i=1,\dots,n$ (where $v_i=(v_{i1},\dots,v_{in})\in \mathbb{R}^n$ and $\mu_i\in \mathbb{R}$, so $\ang{v_i,x} = \mu_i$ is a shorter form of writing the linear equation $v_{i1}x_1+\dots+v_{in}x_n= \mu_i$). Note that this condition is not affected by rescaling the equations $\ang{v_i,x} = \mu_i$, so it does not depend on the parametrization chosen for each of the hyperplanes.

It is now a very natural question to ask how large an essential cover of $\signbin^n$ needs to be. Linial and Radhakrishnan \cite{LinialR05} showed in 2005 that any essential cover must contain at least $\Omega(\sqrt{n})$ hyperplanes. On the other hand, they also gave a construction of an essential cover consisting of $\ceil{n/2}+1$ hyperplanes, which still remains the best known upper bound. Yehuda and Yehudayoff \cite{YehudaY21} improved the lower bound to $\Omega(n^{0.52})$, and more recently Araujo, Balogh, and Mattos \cite{araujo2022} further improved the lower bound to $\Omega(n^{5/9}/(\log n)^{4/9})$ by refining the methods in \cite{YehudaY21}.

In this paper, we further improve the lower bound for the size of an essential cover of the hypercube.

\begin{theorem}\label{thm:bound}
    For $n\ge 2$, any essential cover of the $n$-dimensional hypercube $\signbin^n$ must consist of at least $10^{-2}\cdot n^{2/3}/(\log n)^{2/3}$ hyperplanes.
\end{theorem}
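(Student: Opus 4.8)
The plan is to follow the general framework pioneered by Linial--Radhakrishnan and refined by Yehuda--Yehudayoff and by Araujo--Balogh--Mattos, in which one assigns to the essential cover a low-degree polynomial that vanishes on the hypercube, and then extracts a contradiction from the combinatorial structure forced by condition (E3). Concretely, given an essential cover $h_1,\dots,h_k$ with $h_i = \{x : \ang{v_i,x}=\mu_i\}$, the product $p(x) = \prod_{i=1}^k (\ang{v_i,x}-\mu_i)$ vanishes on all of $\signbin^n$ by (E1); one then studies $p$ modulo the ideal generated by $x_j^2-1$, which forces a linear dependence among the monomials. The point of (E2) is that many variables must appear; the point of (E3) is that for each $i$ there is a ``private'' vertex $z^{(i)}$ lying only on $h_i$, so $p$ restricted to a suitable affine subcube through $z^{(i)}$ looks, up to lower-order terms, like a single linear form. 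I would set up a weighting/measure on $\signbin^n$ (a biased product distribution, as in the previous works) and analyze $\E[p \cdot q]$ for cleverly chosen test functions $q$, using the private vertices to show $p$ cannot be too ``degenerate.''

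The heart of the improvement, I expect, is a sharper partitioning argument. One splits the $k$ hyperplanes into those that are ``heavy'' (involve many variables with large coefficients) and those that are ``light,'' and argues that the light hyperplanes, by an isoperimetric or anticoncentration estimate on $\signbin^n$, can each only be responsible for covering a small fraction of vertices, so there must be many heavy hyperplanes; meanwhile the heavy hyperplanes, because each must have a private vertex by (E3) and because a hyperplane with large coefficients has its zero set concentrated (small measure under the biased distribution), cannot be too numerous either unless $k$ is large. Balancing the two regimes — trading off the number of variables a hyperplane controls against the anticoncentration bound for $\ang{v_i,x}$ over $\signbin^n$ — is what produces the exponent $2/3$ rather than $5/9$. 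I would introduce a parameter $t$ governing this split, optimize over $t$, and the $(\log n)^{2/3}$ factor will emerge from the tail of the relevant large-deviation / Littlewood--Offord-type bound (one pays a $\log$ to make a union bound over the $k$ hyperplanes or over $n$ coordinates work).

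In more detail, the key steps in order: (1) reduce to the case where the number of hyperplanes $k$ is smaller than the claimed bound and derive structural consequences; (2) pick the biased product measure $\pi$ on $\signbin^n$ with a carefully chosen bias $\rho$ (likely $\rho \sim 1 - \Theta((\log n / n)^{1/3} \cdot k)$ or similar), so that under $\pi$ a typical vertex is far from all the ``small-support'' hyperplanes; (3) prove an anticoncentration lemma: for a linear form $\ang{v,x}-\mu$ with $s$ nonzero coefficients, $\pi(\{x : \ang{v,x}=\mu\})$ is at most roughly $\rho^{\Omega(s)}$ or $O(1/\sqrt{s})$ depending on the regime, and combine this with (E3)'s private vertices to bound how many hyperplanes can have support of each given size; (4) use (E2) — every coordinate appears somewhere — together with a counting argument over the $\sum_i |\supp(v_i)| \ge n$ inequality to conclude that the support sizes cannot all be small, forcing $k$ to be large; (5) optimize the free parameters ($\rho$ and the support-size threshold $t$) to extract $k \ge 10^{-2} n^{2/3}/(\log n)^{2/3}$, and handle small $n$ (where $n \ge 2$ makes the bound vacuous or trivially checkable) separately. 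The main obstacle will be step (3)–(4): getting an anticoncentration estimate that is simultaneously strong enough on the heavy hyperplanes and usable (via a union bound, hence the logarithm) on the light ones, while correctly accounting for the fact that (E3)'s private vertices are only guaranteed to exist, not to be generic — one must show a private vertex forces real constraints on $p$, probably by differentiating $p$ or looking at its restriction to the line through $z^{(i)}$ flipping a coordinate in $\supp(v_i)$, and controlling the interaction with the other hyperplanes' contributions.
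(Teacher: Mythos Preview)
Your high-level framework --- a biased random vertex, anticoncentration, and a union bound --- is indeed what the paper does, but the proposal is missing the central ingredient that drives the exponent $2/3$: Bang's lemma. The paper does not analyse the product polynomial $p(x)=\prod_i(\ang{v_i,x}-\mu_i)$ beyond the one consequence already due to Linial--Radhakrishnan (namely $|\supp(v_i)|\le 2k$); instead it applies a corollary of Bang's lemma to a suitably normalised submatrix to produce a \emph{deterministic} point $y\in[-1/3,1/3]^{N_1}$ that is simultaneously at distance at least $\theta=(6\log n)^{1/2}$ from every hyperplane in a designated set $K_2$. Randomly rounding $y$ to $w\in\{\pm 1\}^{N_1}$ with biases $p_j=(1+y_j)/2$ then gives $\Pr[\ang{v_i,w}=\mu_i]\le 2e^{-\theta^2/2}$ by Hoeffding, which is small enough for a union bound over $k\le n$ hyperplanes. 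Your step~(2) posits a single global bias $\rho$; but no fixed bias can push a random $\pm 1$-vector far from \emph{all} of the $K_2$-hyperplanes at once, and the coordinate-dependent biases coming out of Bang's lemma are exactly what achieves this.

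The second gap is in the decomposition. The paper does not merely split into ``heavy'' and ``light'' hyperplanes; it runs an iterative column-removal procedure that outputs a column set $N_1\ne\emptyset$ and a row partition $K_1\sqcup K_2\sqcup K_3$ with two simultaneous guarantees: every row in $K_2$, when normalised to unit $\ell_2$-norm on $N_1$, yields columns of $\ell_1$-norm at most $(60\log n)^{-1/2}$ (precisely the hypothesis needed for Bang to return $\|y\|_\infty\le 1/3$), and every row in $K_3$ has at least $\lceil 10\log n\rceil$ distinct magnitudes on $N_1$ (giving anticoncentration $\le(2/3)^{5\log n}$). The assumption $k\le 10^{-2}n^{2/3}/(\log n)^{2/3}$ is used \emph{only} to show this procedure terminates with $N_1\ne\emptyset$, via a weight-accounting argument combined with $|\supp(v_i)|\le 2k$ and Cauchy--Schwarz. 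Your step~(3) proposes anticoncentration of order $O(1/\sqrt{s})$ or $\rho^{\Omega(s)}$ for a form with $s$ nonzero coefficients; the first is too weak to beat a union bound over $k$ hyperplanes (since $s\le 2k$), and the second is false for general coefficient vectors. Without the Bang step for $K_2$ and the many-magnitudes anticoncentration for $K_3$, the plan as written does not reach $n^{2/3}$.
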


The absolute constant $10^{-2}$ is not optimized in our proof.

We remark that our lower bound for the size of essential covers also implies new lower bounds for some problems in proof complexity, see the discussion in \cite{YehudaY21}.

Linial and Radhakrishnan \cite{LinialR05}, obtained their  lower bound of $\Omega(\sqrt{n})$ for the size of essential covers of $\signbin^n$ by showing the following. For any essential cover of $\signbin^n$ consisting of $k$ hyperplanes, each hyperplane equation contains at most $2k$ variables with non-zero coefficients. We also use this fact in our proof of \cref{thm:bound}.

We furthermore observed that this fact leads to an essentially tight lower bound for another well-known problem concerning hyperplane covers of $\signbin^n$, which can be stated as follows. A \emph{skew cover} of the $n$-dimensional hypercube $\signbin^n$ is a collection of hyperplanes $h_1,\dots,h_k$ covering all vertices in $\signbin^n$ such that each of the hyperplane equations contains all of the variables $x_1,\dots,x_n$ with non-zero coefficients (i.e. for $i=1,\dots,n$, when writing $h_i=\{x\in \R^n \mid \ang{v_i,x} = \mu_i\}$, the vector $v_i$ has full support). The problem is again to determine the smallest possible size of a skew cover of $\signbin^n$. The previous best known lower bound for this problem was $\Omega(n^{2/3}/(\log n)^{4/3})$, which follows from a recent result of Klein \cite{klein2022slicing} (via the same argument as in \cite{yehuda2021slicing}). From the above-mentioned fact about essential covers proved by Linial and Radhakrishnan \cite{LinialR05}, one can deduce the following stronger lower bound for the size of skew covers of $\signbin^n$.

\begin{prop}\label{prop:skew-cover}
For $n > 0$, any skew cover of the $n$-dimensional hypercube $\signbin^n$ must consist of at least $n/2$ hyperplanes.
\end{prop}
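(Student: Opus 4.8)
The plan is to deduce \cref{prop:skew-cover} directly from the Linial--Radhakrishnan fact recalled just above (that in an essential cover of $\signbin^n$ consisting of $k$ hyperplanes, every hyperplane equation has at most $2k$ variables with non-zero coefficients). So suppose $h_1,\dots,h_k$ is a skew cover of $\signbin^n$. First I would note that this collection is nonempty (indeed $k\ge 2$, since a single hyperplane meets at most $2^{n-1}$ of the $2^n$ vertices). Then I would prune: repeatedly discard any hyperplane whose removal still leaves a cover of $\signbin^n$, until no further hyperplane can be removed. This produces a sub-collection $h_{i_1},\dots,h_{i_{k'}}$ with $k'\le k$ which is nonempty, still covers every vertex, and is now minimal with respect to this property — that is, it satisfies conditions (E1) and (E3) of \cref{def:essential-cover}.

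The key point is that condition (E2) comes for free. Every hyperplane of a skew cover has a defining equation in which all of $x_1,\dots,x_n$ appear with non-zero coefficients, so in particular each retained hyperplane $h_{i_j}$ has full support, and any single one of them already witnesses (E2) for all coordinates $j\in[n]$. Hence $h_{i_1},\dots,h_{i_{k'}}$ is an essential cover of $\signbin^n$. Applying the Linial--Radhakrishnan bound to this essential cover, every one of its hyperplane equations has at most $2k'$ variables with non-zero coefficients; but each $h_{i_j}$ has exactly $n$ variables with non-zero coefficients. Therefore $n\le 2k'\le 2k$, i.e.\ $k\ge n/2$, as claimed.

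There is no real obstacle here: the only thing to check carefully is that the pruning step preserves (E2), and this is immediate precisely because the skew-cover hypothesis is so strong. All the difficulty is hidden in the Linial--Radhakrishnan lemma, which we use as a black box; the proposition is then a one-line corollary of it.
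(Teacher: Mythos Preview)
Your proposal is correct and follows essentially the same argument as the paper: prune the skew cover to a minimal subcover, observe that (E2) is automatic since every retained hyperplane still has full support, and apply \cref{lem:row-support} to conclude $n\le 2k'\le 2k$. The only cosmetic difference is that the paper phrases the pruning as choosing a minimal subset $I\subseteq[k]$ rather than iteratively discarding hyperplanes.
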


The proof of this proposition is given in \cref{sec:prelim}. The bound is tight up to constant factors, as it is not hard to construct a skew cover of $\signbin^n$ consisting of $n+1$ hyperplanes (by taking the hyperplanes described by the equations $x_1+\dots+x_n=n-2t$ for $t=0,\dots,n$).\\
    
\textit{Notation.} For a positive integer $n$, we use $[n]$ to denote the set $\{1,\dots, n\}$. Given sets $A, B, C$, we write $C = A\sqcup B$ to denote a partition of the set $C$ into two disjoint subsets $A$ and $B$. For any real $x>0$, we write $\log x$ for the natural logarithm of $x$ in base $e$. For two sets $A, B$, we use $B^A$ to denote the set of maps $A\to B$. For $x\in B^A$ and a subset $A'\subseteq A$, we denote the restriction of $x$ to $A'$ by $x|_{A'}$, as usual. Given $x\in B^A$, one may think of $x$ as a vector of length $|A|$ indexed by the elements in $A$ with each coordinate taking values in $B$, and for $a\in A$ we write $x_a$ for the value $x(a)\in B$. For convenience, given a positive integer $n$, we use $B^n$ to denote $B^{[n]}$ (of course, this is consistent with the usual definition $B^n=B\times B\times \dots\times B$). 

For a vector $v = (v_1,\dots, v_n)\in \R^n$, we use $\supp(v)$ to denote the support of $v$, which is the set of indices $i\in [n]$ where $v_i\ne 0$. We recall that the $\ell_1$-norm of $v$ is defined as $||v||_1 = \sum_{i = 1}^n |v_i|$; the $\ell_2$-norm of $v$ is defined as $||v||_2 = \lpr{\sum_{i = 1}^n v_i^2}^{1/2}$; and the $\ell_\infty$-norm of $v$ is defined as $||v||_\infty = \max_{i\in [n]}|v_i|$. Given two vectors $u,v\in \R^n$, we write $\ang{u,v} = \sum_{i = 1}^n u_iv_i$ for the standard inner product on $\R^n$. For an $n\times m$ matrix $V\in \R^{n\times m}$ and subsets $A\subseteq [n]$ and $ B\subseteq [m]$, we use $V[A\times B]$ to denote the $|A|\times |B|$ submatrix of $V$ consisting of the rows in $A$ and the columns in $B$.

\section{Preliminaries}\label{sec:prelim}

In this section, we discuss useful definitions and lemmas for our argument. We start with the following two definitions that characterize vectors with many different magnitudes. 

\begin{definition}[Magnitude]\label{def:magnitude}
   For any $x\in \R\setminus \{0\}$, we say that $x$ has \emph{magnitude} $j\in \mathbb{Z}$ if $10^{j} \le |x| < 10^{j+1}$.
\end{definition}

\begin{definition}\label{def:vector-magnitude}
    For a positive integer $S > 0$, we say that a vector $v$ has at least $S$ magnitudes if there exist $S$ non-zero entries in $v$ with distinct magnitudes.
\end{definition}

We remark that the notion of having many magnitudes is a simpler version of the notion of having ``many scales'' introduced in \cite{yehuda2021slicing} and later used in \cite{araujo2022,YehudaY21}.

The motivation behind the above definitions is the following lemma, which states that given a vector $v\in \R^n$ with at least $S$ magnitudes, for a random vector $w\in \signbin^n$, the probability of the event $\ang{v,w} = \alpha$ for any given value $\alpha\in \R$ is exponentially small in $S$. For our purposes, it is convenient to state the lemma for a biased random vector $w\in \signbin^n$. We remark that a similar lemma also appears in \cite{klein2022slicing}.

\begin{lemma}\label{lem:many-magnitudes}
Let $v\in \R^{n}$ be a vector with at least $S$ magnitudes. Let $p\in [1/3, 2/3]^n$ and consider a random vector $w\in \signbin^n$ with independent random entries $w_1,\dots,w_n\in \signbin$, whose distributions are given by
\[w_i = \begin{cases} 1 & \text{with probability } p_i \\ -1 & \text{with probability } 1-p_i \end{cases} \quad \text{for all } i\in [n].\]
Then for any $\alpha\in \R$, we have 
\[\Pr\Big[\ang{v, w} = \alpha\Big] \le \lpr{\frac{2}{3}}^{\ceil{S/2}}.\]
\end{lemma}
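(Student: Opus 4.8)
The plan is to reduce the statement to a simple one-variable anti-concentration estimate applied repeatedly. The key observation is the following: if $x$ and $y$ are two nonzero reals with distinct magnitudes, say $|x| < |y|$, then in fact $|y| \ge 10 |x|$, because $|x| < 10^{j+1}$ forces $|x|$ into a magnitude class below that of $|y|$, so $|y| \ge 10^{j+1} > |x|$; more importantly, $10|x| < 10^{j+2}$ while $|y|\ge 10^{j+1}$ only gives a factor $10$ gap, but the crucial point I actually need is that the \emph{larger} of the two dominates in the sense that $|x| < |y|$ strictly. So given a pair of entries $v_a, v_b$ with $|v_a| < |v_b|$ and distinct magnitudes, conditioning on all coordinates $w_i$ with $i \notin \{a,b\}$, the conditional event $\ang{v,w} = \alpha$ becomes $v_a w_a + v_b w_b = \alpha'$ for some constant $\alpha'$ depending on the conditioning. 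Since $w_a, w_b \in \{\pm 1\}$ take only four combined values, and $v_a w_a + v_b w_b$ takes the values $\pm v_a \pm v_b$, I claim these four values are not all distinct only in degenerate cases, but what matters is that $|v_b| > |v_a|$ guarantees $v_a + v_b$, $v_a - v_b$, $-v_a+v_b$, $-v_a-v_b$ are four distinct reals (since $v_b \ne 0$ separates $\pm$, and $v_a + v_b \ne v_a - v_b$ as $v_b \ne 0$, and $v_a + v_b \ne -v_a + v_b$ as $v_a \ne 0$; the only possible coincidence $v_a + v_b = -(v_a - v_b)$ would need $v_a = 0$). Hence at most two of the four sign patterns $(w_a,w_b)$ can give $v_a w_a + v_b w_b = \alpha'$ — actually at most one, since the four values are distinct. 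Wait: we need $\ang{v,w}=\alpha$, which fixes a single target, so at most one of the four patterns works; but since the marginals are biased in $[1/3,2/3]$, the probability of hitting that single pattern is at most $(2/3)^2$. That is already too crude per pair.

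Let me instead do the cleaner version. Pick $S$ coordinates $i_1, \dots, i_S$ of $v$ with pairwise distinct magnitudes; pair them up into $\lfloor S/2 \rfloor$ disjoint pairs (ignoring a leftover if $S$ is odd), and within each pair order them so that the first has strictly smaller absolute value than the second. Reveal the randomness in rounds: first reveal all coordinates $w_i$ for $i$ outside our chosen $S$ coordinates, then reveal the pairs one at a time, in each pair revealing the \emph{smaller}-magnitude coordinate first and then the larger. After revealing everything except the larger coordinate $w_b$ of the current pair (with partner $w_a$ already revealed), the equation $\ang{v,w} = \alpha$ is equivalent to $v_b w_b = \beta$ for a fixed $\beta$, which pins $w_b$ to at most one value; since $\Pr[w_b = \pm 1] \le 2/3$, this round contributes a factor at most $2/3$. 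Multiplying over the $\lfloor S/2 \rfloor = \ceil{S/2}$ — careful, $\lfloor S/2\rfloor$ pairs when... hmm, $\ceil{S/2}$ pairs requires pairing slightly differently. Actually with $S$ distinct magnitudes I get $\lfloor S/2 \rfloor$ full pairs, giving bound $(2/3)^{\lfloor S/2\rfloor}$, and since $\lfloor S/2 \rfloor \ge \ceil{S/2}$ fails for odd $S$... so to get $\ceil{S/2}$ I should note that for odd $S$ one can also use the leftover coordinate as a ``singleton round'': revealing $w$ everywhere except that one coordinate pins it to one value, contributing another factor $2/3$. Combined, that gives $\ceil{S/2}$ factors of $2/3$ total.

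The key steps in order: (1) extract $S$ coordinates with pairwise distinct magnitudes and hence pairwise distinct absolute values; (2) set up a martingale-style revelation of the $w_i$ in carefully chosen rounds — each round reveals one coordinate, and the rounds are ordered so that whenever we reveal coordinate $c$, all previously revealed coordinates among the chosen $S$ that share a ``pair'' with $c$ have been revealed, and $c$ is the unique unrevealed coordinate in its pair (or it is a singleton); (3) observe that conditioned on everything revealed before round $r$, the event $\ang{v,w}=\alpha$ restricted to the conditioning is equivalent to fixing $w_c$ to at most one value, because $v_c \ne 0$; (4) bound each conditional probability by $2/3$ using $p_c, 1-p_c \in [1/3, 2/3]$; (5) multiply: $\Pr[\ang{v,w}=\alpha] = \prod_r \Pr[\text{round } r \text{ correct} \mid \text{earlier rounds}] \le (2/3)^{\ceil{S/2}}$, where the number of rounds that genuinely constrain a fresh coordinate is $\ceil{S/2}$ (one per pair plus one for the possible odd leftover).

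The main obstacle — really the only subtlety — is bookkeeping the conditioning so that each ``constraining'' round pins down \emph{exactly one} coordinate: if two coordinates of a pair were both unrevealed when we try to use the equation, it would only pin down a line $v_a w_a + v_b w_b = \beta$, not a point, and the distinctness of magnitudes (giving distinct absolute values) is what we'd need if we wanted to analyze that two-dimensional step directly. The chosen revelation order sidesteps this entirely: we never need to solve a two-variable equation, only one-variable equations $v_c w_c = \beta$, and for those we only use $v_c \ne 0$. So the distinct-magnitudes hypothesis is used just to guarantee we can find $S$ coordinates with distinct nonzero values, which is automatic, and in fact for this streamlined argument even ``$S$ nonzero entries'' would suffice to get $(2/3)^{\ceil{S/2}}$ — I would double-check whether the paper wants the magnitude hypothesis for later use (likely in combining many such vectors), but for the lemma as stated the one-variable pinning argument is clean and complete. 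I would write it with the explicit ordering of rounds and a short induction on the number of rounds to make the product bound rigorous.
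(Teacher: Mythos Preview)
Your sequential-revelation argument in steps (2)--(5) has a genuine gap. When you are at the round that reveals the larger coordinate $w_b$ of, say, the first pair, the coordinates belonging to all later pairs are \emph{still unrevealed}. So conditioning on ``everything revealed before this round'' leaves the event $\ang{v,w}=\alpha$ as a constraint of the form $v_b w_b + \sum_{\text{later}} v_\ell w_\ell = \beta$ with the later $w_\ell$'s still random; this does not pin $w_b$ to a single value, and you cannot extract a factor $2/3$ here. The product formula you wrote in step (5) has no meaning, because $\{\ang{v,w}=\alpha\}$ is a single linear event, not an intersection of per-round events measurable in the revealed filtration.

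There is a quick sanity check that confirms the argument cannot work as written. You yourself note that the argument would go through under the weaker hypothesis ``$v$ has $S$ nonzero entries''. But take $v=(1,1,\dots,1)\in\R^n$ and $p_i=1/2$ for all $i$; then $\Pr[\ang{v,w}=0]=\binom{n}{n/2}2^{-n}=\Theta(n^{-1/2})$, which is vastly larger than $(2/3)^{\ceil{n/2}}$. So the magnitude hypothesis is not decorative, and any proof must use it in an essential way.

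The paper's proof does use it, and in a way that avoids sequential conditioning entirely. Among the $S$ distinct magnitudes, at least $r=\ceil{S/2}$ have the same parity; pick $r$ entries with those magnitudes and order them so that consecutive magnitudes differ by at least $2$, hence $|v_1|\ge 10|v_2|\ge\cdots\ge 10^{r-1}|v_r|$. This geometric decay makes the map $(w_1,\dots,w_r)\mapsto\sum_{i=1}^r v_i w_i$ injective on $\signbin^r$, because $|v_i|>\sum_{\ell>i}|v_\ell|$. Now condition on all the \emph{other} $n-r$ coordinates at once: there is at most one choice of $(w_1,\dots,w_r)$ that hits the target, and since each $p_i,1-p_i\le 2/3$ and the $w_i$ are independent, that single choice has probability at most $(2/3)^r$. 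The whole point is to isolate a block of $r$ coordinates on which the sign-to-sum map is injective, and then condition on everything outside that block simultaneously rather than one pair at a time.
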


\begin{proof}
Let $r = \ceil{S/2}$. Among the at least $S$ different magnitudes occurring for the entries of the vector $v$, there are at least $r$ even numbers or at least $r$ odd numbers. So, upon relabeling the indices, we may assume without loss of generality that the first $r$ entries $v_1,\dots, v_r$ of $v=(v_1,\dots,v_n)\in \R^n$ have distinct magnitudes with the same parity and are ordered in decreasing order of magnitudes. Now, for each $i\in [r-1]$, if $m$ is the magnitude of $v_i$, then $v_{i+1}$ has magnitude at most $m-2$. This means that $|v_i|\ge 10^m$ and $|v_{i+1}| < 10^{m-1}$, and therefore $|v_i| \ge 10 |v_{i+1}|$ for every $i\in [r-1]$.

This implies that for any $\alpha\in \R$ there is at most one assignment for $w_1,\dots, w_r\in \signbin$ satisfying $\sum_{i = 1}^r v_iw_i = \alpha$.

Let us now condition on arbitrary outcomes of $w_{r+1},\dots, w_{n}\in \signbin$. Conditional on such fixed outcomes, it suffices to show that, for any $\alpha\in \R$, we have $\sum_{i = 1}^r v_iw_i = \alpha - \sum_{j = r+1}^n v_jw_j$ with probability at most $\lpr{\frac{2}{3}}^{r}$. Recall that there is at most one assignment for $w_1,\dots, w_r\in \signbin$ satisfying this equation. Since $w_1,\dots, w_r$ are independent random variables with
\[\Pr[w_i = 1] \le \frac{2}{3} \quad \text{and} \quad \Pr[w_i = -1] \le \frac{2}{3}\]
for every $i\in [r]$, we obtain
\[\Pr\lbr{\sum_{i = 1}^r v_iw_i = \alpha - \sum_{j = r+1}^n v_jw_j \, \Bigg| \, w_{r+1},\dots, w_n } \le \lpr{\frac{2}{3}}^r = \lpr{\frac{2}{3}}^{\ceil{S/2}}.\]
This implies our desired statement.
\end{proof}

The following lemma is due to Linial and Radhakrishnan \cite{LinialR05} and gives an upper bound for the number of non-zero coefficients for the hyperplane equations in an essential cover. It can be proved via the Combinatorial Nullstellensatz \cite{alon99}.

\begin{lemma}[\cite{LinialR05}]\label{lem:row-support}
Let $h_1,\dots,h_k$ be hyperplanes in $\R^n$ forming an essential cover of the hypercube $\signbin^n$. For each $i\in [k]$, let $h_i=\{x\in \R^n \mid \ang{v_i,x} = \mu_i\}$. Then we have $|\supp(v_i)|\le 2k$ for all $i\in [k]$.
\end{lemma}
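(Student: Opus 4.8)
The plan is to prove the contrapositive via the Combinatorial Nullstellensatz: if some hyperplane equation, say $\ang{v_1, x} = \mu_1$, had more than $2k$ variables with non-zero coefficients, then I would construct a vertex of $\signbin^n$ that is covered \emph{only} by $h_1$, contradicting nothing by itself — but the real point is to use (E3) together with this to derive a contradiction. More precisely, (E3) guarantees a vertex $x^\ast \in \signbin^n$ covered only by $h_1$ among all the $h_i$. I want to show that if $|\supp(v_1)|$ is large, the set of such ``privately covered'' vertices is too constrained; the cleanest route is actually to directly show that if $|\supp(v_1)| > 2k$, then there is \emph{no} vertex in $h_1$ that avoids $h_2, \dots, h_k$, contradicting (E3).

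First I would set $T = \supp(v_1)$ and suppose $|T| > 2k$. Consider the polynomial identity approach on the variables indexed by $T$: we want a point $x \in \{\pm 1\}^T$ with $\ang{v_1, x}|_T = \mu_1 - \ang{v_1, x}|_{[n]\setminus T}$ (after fixing the coordinates outside $T$ arbitrarily — note $v_1$ vanishes off $T$ so this term is a constant we can absorb), while simultaneously $\ang{v_i, x} \ne \mu_i$ for all $i \ge 2$. The standard trick is to encode the target via a single linear constraint and the forbidden hyperplanes via the product $\prod_{i=2}^{k}(\ang{v_i, x} - \mu_i)$; one considers, over the variables $x_j$ for $j \in T$, the polynomial
\[
P(x) \;=\; \Big(\prod_{i=2}^{k}(\ang{v_i, x} - \mu_i)\Big)\cdot Q(x),
\]
where $Q$ is chosen (e.g.\ as an appropriate interpolation / ``one fewer factor'' polynomial built from $\ang{v_1,x}-\mu_1$) so that $P$ is non-zero as a polynomial on $\{\pm 1\}^T$ precisely when a good vertex exists, and the degree of $P$ in each variable is at most $1$ while the total degree is at most $k-1 + 1 = k < |T|$... — this is where I need to be careful. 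The honest version: reduce each monomial mod the relations $x_j^2 = 1$ for $j \in T$, so every polynomial becomes multilinear of degree $\le |T|$; the product $\prod_{i=2}^k(\ang{v_i,x}-\mu_i)$ has degree $\le k-1$, and I need one more degree of freedom from the ``$h_1$ covers it'' constraint, giving total degree $\le k$. If $|T| > 2k \ge k$, Alon's Combinatorial Nullstellensatz (the precise statement I would cite from \cite{alon99}: a polynomial of degree $\sum t_j$ with the coefficient of $\prod x_j^{t_j}$ non-zero cannot vanish on a grid of sides $t_j + 1$) lets me conclude the relevant polynomial does not vanish identically on $\{\pm 1\}^T$ unless its top multilinear part is zero — and I would argue the top part is controlled by the leading behavior of the product of the $k-1$ linear forms, which is non-zero after handling degeneracies by a perturbation/genericity argument on the choice of which $2k$ coordinates to use.

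The main obstacle, and the place the argument really lives, is getting the degree bookkeeping to close with the factor of $2$: naively one linear form per hyperplane $h_2,\dots,h_k$ gives degree $k-1$, which already beats $|T| > k$, so the factor $2k$ rather than $k$ must come from the fact that we cannot freely choose the coordinates outside $T$ — we need the constructed vertex to lie on $h_1$, which costs us, and from the fact that the covering condition (E1) is global so the "private vertex" of $h_1$ from (E3) must be handled together with (E1) for the vertices we perturb. I expect the clean writeup to: (i) fix all coordinates outside a carefully chosen subset $T' \subseteq T$ of size exactly $2k+1$, (ii) on $\{\pm 1\}^{T'}$ consider $R(x) = \prod_{i=1}^{k}(\ang{v_i, x} - \mu_i')$ where $\mu_i'$ absorbs the fixed coordinates, which has degree $k \le |T'| - 1 = 2k$, wait — that still doesn't need $2k$. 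So the actual subtlety I'd flag as the hard part: one must use that $h_1$ itself appears among the factors and that on $\{\pm 1\}^{T'}$ the polynomial $R$ vanishes everywhere (by (E1) every vertex is on \emph{some} $h_i$; but vertices could also be covered by $h_i$ with $\supp(v_i) \not\subseteq T'$), so the reduction must restrict to a sub-hypercube on which only hyperplanes $h_i$ with $\supp(v_i) \cap ([n]\setminus T') = \emptyset$ can possibly cover vertices — forcing us to choose $T'$ large enough to contain the supports of the "relevant" hyperplanes, and \emph{that} is where $|T| \le 2k$ (each of the $\le k$ hyperplanes contributing $\le$ its support, summing to the bound) genuinely enters. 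I would organize the proof around choosing $T'$ so that restricting to the sub-hypercube $\{\pm 1\}^{T'}$ (with other coordinates fixed suitably) turns $h_1,\dots,h_k$ into an essential-cover-like configuration of $\{\pm 1\}^{T'}$, then apply the Combinatorial Nullstellensatz to the product of all $k$ linear forms — which has degree $k$, vanishes on all of $\{\pm 1\}^{T'}$ by (E1)-on-the-sub-hypercube, hence $k \ge |T'|$, and unwinding gives $|\supp(v_1)| \le 2k$.
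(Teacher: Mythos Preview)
The paper does not prove this lemma; it is quoted from \cite{LinialR05} with only the remark that it ``can be proved via the Combinatorial Nullstellensatz,'' so there is no detailed argument to compare against. Your proposal is on the right track in invoking (E3), freezing the coordinates outside $T=\supp(v_1)$, and forming the product polynomial, but it does not close, and the place it breaks is exactly the factor of $2$ you flag as mysterious. Your final claim --- that the degree-$k$ product $\prod_{i=1}^k(\ang{v_i,x}-\mu_i')$ vanishing on all of $\signbin^{T'}$ forces $k\ge |T'|$ --- is simply false (consider $z_1^2-1$, which has degree $2$ and vanishes on $\signbin^m$ for every $m$); the Combinatorial Nullstellensatz is a \emph{non}-vanishing criterion. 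Nor does the factor $2$ arise from ``choosing $T'$ large enough to contain the supports of the relevant hyperplanes'': the other $h_i$ restrict perfectly well to any sub-cube regardless of their supports.

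The missing idea is a pigeonhole on signs. With $x^\ast$ the vertex covered only by $h_1$ and $T=\supp(v_1)$, freeze coordinates outside $T$ to those of $x^\ast$; then $G(z)=\prod_{i=2}^k(\ang{v_i,z}-\mu_i')$ has degree at most $k-1$, is nonzero at $z^\ast=x^\ast|_T$, and (by (E1)) vanishes at every $z\in\signbin^{T}$ with $\ang{v_1|_T,z}\ne\mu_1$. Now the $|T|$ nonzero reals $v_{1j}z^\ast_j$ (for $j\in T$) have at least $\lceil |T|/2\rceil$ of one sign; let $S\subseteq T$ index those. Flipping the coordinates of $z^\ast$ in any nonempty $S'\subseteq S$ changes $\ang{v_1|_T,\cdot}$ by $-2\sum_{j\in S'}v_{1j}z^\ast_j\ne 0$, so the resulting point lies off $h_1$ and hence is a zero of $G$. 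Therefore, on the sub-cube obtained by further freezing $z_j=z^\ast_j$ for $j\in T\setminus S$, the polynomial $G$ has degree at most $k-1$ and vanishes at every point of $\signbin^{S}$ except one. Its multilinear reduction is then a nonzero scalar multiple of the single-point indicator $\prod_{j\in S}(1+z^\ast_j z_j)/2$, which has degree exactly $|S|$; hence $k-1\ge |S|\ge |T|/2$, giving $|\supp(v_1)|\le 2(k-1)\le 2k$.
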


We observed that \cref{lem:row-support} also gives a lower bound for the number of hyperplanes in any skew cover of the hypercube $\signbin^n$, as stated in \cref{prop:skew-cover}. Recall that the hyperplanes $h_1,\dots, h_k$ form a \emph{skew cover} of the hypercube $\signbin^n$ if every vertex in $\signbin^n$ is covered by at least one of the hyperplanes $h_1,\dots, h_k$ and for each $i=1,\dots,k$, when writing $h_i=\{x\in \R^n \mid \ang{v_i,x} = \mu_i\}$, all coordinates of vector $v_i\in \R^n$ are non-zero.

\begin{proof}[Proof of \cref{prop:skew-cover}]
Suppose $h_1,\dots, h_k$ are hyperplanes forming a skew cover of $\signbin^n$. For $i=1,\dots,k$, let us write $h_i=\{x\in \R^n \mid \ang{v_i,x} = \mu_i\}$. Then for every $i\in [k]$, all coordinates of the vector $v_i$ are non-zero.

Let $I\subseteq [k]$ be a minimal subset with respect to the property that the hyperplanes $h_i$ for $i\in I$ cover all vertices in $\signbin^n$ (and note that $I\ne \emptyset$). We claim that these hyperplanes $h_i$ for $i\in I$ form an essential cover of $\signbin^n$. Indeed, conditions (E1) and (E3) in \cref{def:essential-cover} are satisfied by the choice of $I$. Condition (E2) is satisfied, because for every index $i\in I$ all coordinates of $v_i$ are non-zero (so for any $j\in [n]$ one can take an arbitrary index $i\in I$ in condition (E2)).

Now, taking an arbitrary index $i\in I$, \cref{lem:row-support} implies that $n=|\supp(v_i)|\le 2|I|\le 2k$, so we must have $k\ge n/2$.
\end{proof}

Our proof of \cref{thm:bound} also relies on the following lemma, which Ball \cite{ball1991} extracted from Bang's solution of Tarski's plank problem \cite{bang1951} (see for example \cite{araujo2022} for a short self-contained proof of this lemma).

\begin{lemma}[Bang's lemma \cite{bang1951,ball1991}]\label{lem:bang}
Let $M$ be a  symmetric $\ell\times \ell$ matrix such that $M_{ii} = 1$ for every $i\in [\ell]$. Then for any vector $\mu = (\mu_1,\dots, \mu_\ell)\in \R^\ell$ and any real number $\theta \ge 0$, there exists $\eps\in \{\pm 1\}^\ell$ such that 
\[\big|(M(\theta\eps))_i - \mu_i\big|\ge \theta \quad \text{for all } i\in [\ell].\]
\end{lemma}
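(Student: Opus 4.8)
The plan is to choose $\eps\in\signbin^\ell$ so as to maximize a suitable quadratic functional and then read off the conclusion from the optimality of $\eps$ under flipping a single coordinate. If $\theta=0$ the statement is trivial, so assume $\theta>0$. I would define
\[g(\eps) \;=\; \ang{M(\theta\eps),\,\theta\eps} - 2\ang{\mu,\,\theta\eps} \;=\; \theta^2\ang{M\eps,\eps} - 2\theta\ang{\mu,\eps},\]
and let $\eps\in\signbin^\ell$ be a maximizer of $g$ over the (finite) set $\signbin^\ell$.

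Next, fix $j\in[\ell]$ and let $\eps'=\eps-2\eps_j e_j$ be the point obtained by flipping the $j$-th sign of $\eps$ (here $e_j$ is the $j$-th standard basis vector). Using that $M$ is symmetric (so that $\ang{M\eps,e_j}=\ang{Me_j,\eps}=(M\eps)_j$), that $M_{jj}=1$, and that $\eps_j^2=1$, a direct expansion gives $\ang{M\eps',\eps'} = \ang{M\eps,\eps}-4\eps_j(M\eps)_j+4$ and $\ang{\mu,\eps'}=\ang{\mu,\eps}-2\eps_j\mu_j$, hence
\[g(\eps')-g(\eps) \;=\; -4\theta^2\eps_j(M\eps)_j + 4\theta^2 + 4\theta\eps_j\mu_j.\]
Since $\eps$ maximizes $g$, the left-hand side is $\le 0$; dividing by $4\theta>0$ and rearranging yields $\eps_j\big((M(\theta\eps))_j-\mu_j\big)\ge\theta$ (using $\theta(M\eps)_j=(M(\theta\eps))_j$). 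As $\eps_j\in\signbin$, this gives $\labs{(M(\theta\eps))_j-\mu_j}\ge\theta$, and since $j\in[\ell]$ was arbitrary, this is exactly the desired conclusion.

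The only step that takes any foresight is writing down the functional $g$: one wants the quadratic part to produce, under a single sign flip, a cross term proportional to $\eps_j(M\eps)_j$ (this is where symmetry of $M$ and $M_{jj}=1$ enter), while the linear part $-2\theta\ang{\mu,\eps}$ must be normalized so that it contributes the $\mu_j$-correction with exactly the coefficient needed to produce the bound $\theta$. Once $g$ is chosen correctly, there is no real obstacle — the rest is a two-line computation. (This is the classical argument behind Bang's solution of Tarski's plank problem; intuitively, maximizing $g$ forces every coordinate of $M(\theta\eps)$ to lie outside the slab of half-width $\theta$ centered at $\mu_j$.)
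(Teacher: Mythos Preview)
Your proof is correct and is precisely the classical Bang argument: maximize the quadratic form $g(\eps)=\theta^2\ang{M\eps,\eps}-2\theta\ang{\mu,\eps}$ over $\signbin^\ell$ and use the single-coordinate-flip optimality condition together with $M_{jj}=1$ and the symmetry of $M$ to conclude. The computations check out line by line.

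Note, however, that the paper does not actually prove this lemma; it cites it as a known result extracted by Ball from Bang's solution of Tarski's plank problem and refers the reader to \cite{araujo2022} for a short self-contained proof. The argument you have written is exactly that standard proof, so there is nothing to compare against --- your proposal simply fills in what the paper leaves as a reference.
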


In our proof of \cref{thm:bound}, we apply the following corollary of \cref{lem:bang} to a certain submatrix of the coefficient matrix of an essential cover. This allows us to find a point that is far from certain hyperplanes in the cover.

\begin{cor}\label{cor:bang}
Let $V\in \R^{\ell\times m}$ be an $\ell\times m$ matrix with row vectors $v_1,\dots, v_\ell\in \R^{m}$ and column vectors $v_{*1},\dots,v_{*m}\in \R^\ell$. Suppose that $||v_i||_2 = 1$ for all $i\in [\ell]$, and let $\theta \ge 0$ be a real number satisfying $\theta ||v_{*j}||_1 \le 1/3$ for all $j\in [m]$. Then for any vector $\mu = (\mu_1,\dots, \mu_\ell)\in \R^\ell$, there exists $y\in \R^m$ such that $||y||_\infty \le 1/3$ and
\[\big|\ang{v_i, y} - \mu_i\big|\ge \theta \quad \text{for all } i\in [\ell].\]
\end{cor}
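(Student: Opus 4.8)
The plan is to apply Bang's lemma (\cref{lem:bang}) to the Gram matrix $M = VV^\top$ and then pull the resulting sign vector back through $V^\top$ to produce the point $y$.

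First I would set $M = VV^\top \in \R^{\ell\times\ell}$. This matrix is symmetric, and its diagonal entries are $M_{ii} = \ang{v_i,v_i} = \|v_i\|_2^2 = 1$ by hypothesis, so \cref{lem:bang} applies. Given the target vector $\mu\in\R^\ell$ and the real number $\theta\ge 0$, the lemma produces $\eps\in\{\pm 1\}^\ell$ with $|(M(\theta\eps))_i - \mu_i|\ge \theta$ for all $i\in[\ell]$. The key observation is that $(M\eps)_i = \sum_{k=1}^\ell \ang{v_i,v_k}\eps_k = \ang{v_i,\, \sum_{k=1}^\ell \eps_k v_k} = \ang{v_i,\, V^\top\eps}$, so if I define
\[
y \coloneqq \theta\, V^\top\eps \in \R^m,
\]
then $\ang{v_i,y} = \theta\,(M\eps)_i = (M(\theta\eps))_i$, and hence $|\ang{v_i,y}-\mu_i|\ge\theta$ for every $i\in[\ell]$, which is exactly the desired lower bound.

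It remains to check the bound $\|y\|_\infty \le 1/3$. For each $j\in[m]$, the $j$-th coordinate of $V^\top\eps$ is $\sum_{i=1}^\ell V_{ij}\eps_i = \sum_{i=1}^\ell (v_{*j})_i\,\eps_i$, whose absolute value is at most $\sum_{i=1}^\ell |(v_{*j})_i| = \|v_{*j}\|_1$. Therefore $|y_j| = \theta\,|(V^\top\eps)_j| \le \theta\,\|v_{*j}\|_1 \le 1/3$ by the assumed inequality on $\theta$, and taking the maximum over $j$ gives $\|y\|_\infty\le 1/3$.

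I do not anticipate a genuine obstacle here: the only real step is recognizing that the right object to feed into Bang's lemma is the Gram matrix $M=VV^\top$ (which automatically has unit diagonal thanks to the normalization $\|v_i\|_2=1$), and that the column-sum condition $\theta\|v_{*j}\|_1\le 1/3$ is precisely what is needed to control $\|V^\top\eps\|_\infty$. Everything else is bookkeeping with the identity $(M\eps)_i = \ang{v_i, V^\top\eps}$.
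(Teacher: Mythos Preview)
Your proof is correct and is essentially identical to the paper's own argument: apply Bang's lemma to the Gram matrix $M=VV^\top$, set $y=\theta V^\top\eps$, and use the column $\ell_1$-norm hypothesis to bound $\|y\|_\infty$.
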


\begin{proof}
Apply \cref{lem:bang} to the symmetric $\ell\times 
\ell$ matrix $M:= V V^T$ and the vector $\mu\in \R^{\ell}$. Note that we have $M_{ii} =1 $ for all $i\in [\ell]$ since $||v_i||_2 = 1$ for all $i\in [\ell]$. Hence there exists a vector $\eps\in \signbin^\ell$ such that 
\[\big|(M(\theta\eps))_i - \mu_i\big|\ge \theta \quad \text{for all } i\in [\ell].\]
Setting $y := \theta V^T \eps\in \R^m$, for every $i\in [\ell]$ we obtain 
\[\big|\ang{v_i, y} - \mu_i\big| =\big|(Vy)_i - \mu_i\big|=\labs{(V(\theta V^T \eps))_i - \mu_i}=\labs{(VV^T(\theta  \eps))_i - \mu_i}=\labs{(M(\theta\eps))_i - \mu_i}\ge \theta.\]
Furthermore, we have 
\begin{align*}
    ||y||_\infty &= ||\theta V^T\eps||_\infty 
    = \max_{j\in [m]}\labs{\theta \sum_{i = 1}^\ell v_{ij}\eps_i}
    \le \max_{j\in [m]}\theta \sum_{i = 1}^\ell |v_{ij}| = \max_{j\in [m]}  \theta ||v_{*j}||_1  
    \le \frac{1}{3},
\end{align*}
as desired.
\end{proof}

Finally, we will use the following well-known concentration inequality.

\begin{lemma}[Hoeffding's inequality \cite{hoeffding63}]\label{lem:hoeffding}
    Let $a_1,\dots, a_\ell, b_1,\dots, b_\ell\in \R$ and let $z_1,\dots, z_\ell$ be independent real random variables such that for all $j\in [\ell]$, we always have $a_j\le z_j\le b_j$. Let $z = \sum_{i= 1}^\ell z_i$, then for every $t > 0$ we have
    \[\Pr\Big[\big|z - \E[z]\big| \ge t\Big] \le 2\cdot \exp\lpr{-\frac{2t^2}{\sum_{j = 1}^\ell (b_j - a_j)^2}}.\]
\end{lemma}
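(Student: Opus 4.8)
The plan is to prove this by the exponential moment method (Chernoff bounding). I would first bound the upper tail $\Pr[z - \E[z] \ge t]$, and then obtain the two-sided bound by applying the same argument to $-z = \sum_j (-z_j)$ and taking a union bound, which accounts for the factor of $2$. For the upper tail, fix a parameter $s > 0$. Applying Markov's inequality to the nonnegative random variable $e^{s(z - \E[z])}$ and using independence of the $z_j$, we get
\[\Pr\big[z - \E[z] \ge t\big] \le e^{-st}\,\E\big[e^{s(z - \E[z])}\big] = e^{-st}\prod_{j=1}^\ell \E\big[e^{s(z_j - \E[z_j])}\big].\]

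The key step is the classical bound usually called \emph{Hoeffding's lemma}: if $X$ is a random variable with $\E[X] = 0$ and $a \le X \le b$ almost surely, then $\E[e^{sX}] \le \exp\lpr{s^2(b-a)^2/8}$. I would prove this using convexity of $x \mapsto e^{sx}$: for $x \in [a,b]$ one has $e^{sx} \le \frac{b-x}{b-a}e^{sa} + \frac{x-a}{b-a}e^{sb}$, and taking expectations together with $\E[X] = 0$ gives $\E[e^{sX}] \le \frac{b}{b-a}e^{sa} - \frac{a}{b-a}e^{sb}$. Writing $q = -a/(b-a) \in [0,1]$ and $u = s(b-a)$, the right-hand side equals $e^{\varphi(u)}$ with $\varphi(u) = -qu + \log\lpr{1 - q + q e^u}$. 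One checks $\varphi(0) = \varphi'(0) = 0$, and $\varphi''(u) = \rho(1-\rho) \le 1/4$ where $\rho = qe^u/(1-q+qe^u) \in [0,1]$; hence Taylor's theorem yields $\varphi(u) \le u^2/8 = s^2(b-a)^2/8$.

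Applying this with $X = z_j - \E[z_j]$, which lies in an interval $[a_j - \E[z_j],\, b_j - \E[z_j]]$ of length exactly $b_j - a_j$, the product above is bounded by $\exp\lpr{\tfrac{s^2}{8}\sum_{j=1}^\ell (b_j - a_j)^2}$, so
\[\Pr\big[z - \E[z] \ge t\big] \le \exp\lpr{-st + \frac{s^2}{8}\sum_{j=1}^\ell (b_j - a_j)^2}.\]
I would then optimize the (quadratic in $s$) exponent, which is minimized at $s = 4t/\sum_{j=1}^\ell(b_j-a_j)^2 > 0$, giving the upper tail bound $\exp\lpr{-2t^2 / \sum_{j=1}^\ell (b_j - a_j)^2}$. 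Running the same argument for $-z$ (whose summands $-z_j$ satisfy $-b_j \le -z_j \le -a_j$, an interval of the same length $b_j - a_j$) gives an identical bound for $\Pr[z - \E[z] \le -t]$, and adding the two yields the stated inequality.

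The main obstacle is the proof of Hoeffding's lemma, and specifically the calculus estimate $\varphi''(u) \le 1/4$: this is exactly what produces the constant $2$ in the exponent rather than a weaker constant. The Chernoff wrapping, the use of independence, and the optimization over $s$ are all routine.
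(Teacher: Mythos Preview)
Your proof is correct and is the standard Chernoff-method proof of Hoeffding's inequality. The paper, however, does not prove this lemma at all: it is stated as a well-known concentration inequality with a citation to Hoeffding's original paper \cite{hoeffding63}, and is used as a black box in the proof of \cref{prop:find-vertex}. So there is no ``paper's own proof'' to compare against; your argument supplies exactly the classical derivation (Markov applied to the moment generating function, Hoeffding's lemma via convexity and the $\varphi''\le 1/4$ bound, optimization in $s$, and a union bound for the two tails).
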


\section{Outline}
We employ a strategy similar to the one in \cite{YehudaY21}. Given an essential cover of the hypercube $\signbin^n$, we consider its coefficient matrix, recording the coefficients of the linear equations corresponding to the hyperplanes in the cover. More formally, if the hyperplanes are given by equations of the form $\ang{v_i,x} = \mu_i$ for $i=1,\dots,k$, then the coefficient matrix of the essential cover is the $k\times n$ matrix with rows $v_1,\dots,v_k$.

We first show that for any essential cover consisting of only a small number of hyperplanes, there is a decomposition of the coefficient matrix of a certain structured form. Using this decomposition, we then obtain a contradiction by probabilistically finding a vertex in $\signbin^n$ that is not covered by any of the hyperplanes in the cover. Our decomposition of the coefficient matrix is given by the following proposition.

\begin{prop}[Matrix decomposition]\label{prop:decomposition}
For $n\ge 2$, let $h_1,\dots,h_k$ be hyperplanes in $\R^n$ forming an essential cover of the hypercube $\signbin^n$. For each $i\in [k]$, let $h_i=\{x\in \R^n \mid \ang{v_i,x} = \mu_i\}$, and let $V\in \R^{k\times n}$ be the matrix with rows $v_1,\dots,v_k$. If $k\le 10^{-2}\cdot n^{2/3}/(\log n)^{2/3}$, then there exists a partition $[k] = K_1\sqcup K_2\sqcup K_3$ of the row indices of $V$ and a partition $[n] = N_1\sqcup N_2$ of the column indices of $V$ with $N_1 \ne \emptyset$, such that the following conditions hold:
\begin{enumerate}
    \item Every entry in the submatrix $V[K_1 \times N_1]$ is zero. \label[property]{item:K1N1zero}
    \item There exists a $|K_2|\times |N_1|$ matrix $V'$ that can be obtained from $V[K_2\times N_1]$ by rescaling the rows in  in some way (by some non-zero real numbers), such that every row in $V'$ has $\ell_2$-norm equal to $1$ and every column in $V'$ has $\ell_1$-norm at most $(60\log n)^{-1/2}$.\label[property]{item:norm}
    \item In the submatrix $V[K_3\times N_1]$, every row has at least $\ceil{10\log n}$ magnitudes.\label[property]{item:many-magnitudes}    
\end{enumerate}

\end{prop}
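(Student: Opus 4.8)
The plan is to build the partition $[k]=K_1\sqcup K_2\sqcup K_3$ and $[n]=N_1\sqcup N_2$ by an iterative "peeling" procedure on the columns, guided by the structure of the coefficient matrix $V$. First, recall from \cref{lem:row-support} that every row of $V$ has at most $2k$ nonzero entries, which is the only place condition (E3) really enters; together with the bound $k\le 10^{-2}n^{2/3}/(\log n)^{2/3}$ this will give enough room later. The idea is to let $N_1$ be a set of "important" columns and $N_2$ the rest, chosen so that each row $v_i$ either vanishes entirely on $N_1$ (these rows go into $K_1$), or has a restriction $v_i|_{N_1}$ that is "spread out" in a quantitative sense (these go into $K_2$), or has $v_i|_{N_1}$ with many magnitudes (these go into $K_3$). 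The main work is to choose $N_1$ so that the $K_2$-rows, after rescaling to unit $\ell_2$-norm, have small column $\ell_1$-norms, as demanded by \cref{item:norm}.

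Here is the iterative construction I would carry out. Start with $N_1=[n]$ and repeatedly remove columns: at each step, look at the submatrix $V[K_2\times N_1]$ where $K_2$ is the current set of rows that are nonzero on $N_1$ but do not yet have $\ceil{10\log n}$ magnitudes on $N_1$. For each such row, normalize it to $\ell_2$-norm $1$; if some column $j\in N_1$ has $\ell_1$-norm (in the normalized matrix) exceeding $(60\log n)^{-1/2}$, remove $j$ from $N_1$ (moving it into $N_2$) and iterate. When the process halts, every column of the normalized $K_2$-submatrix has small $\ell_1$-norm, so \cref{item:norm} holds; rows that became identically zero on $N_1$ during the process go to $K_1$, giving \cref{item:K1N1zero}; and rows that acquire $\ceil{10\log n}$ magnitudes on $N_1$ at some point are placed in $K_3$, giving \cref{item:many-magnitudes}. (One must be a little careful that a row's membership is determined once and for all; the clean way is to fix $K_3$ to be the rows whose restriction to the \emph{final} $N_1$ has $\ceil{10\log n}$ magnitudes, $K_1$ the rows that are zero on the final $N_1$, and $K_2$ everything else, and then verify the norm condition for this final $N_1$.)

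The two things to check are that the process terminates with $N_1\ne\emptyset$ and that it terminates at all with the right column-norm bound. Termination is automatic since $N_1$ strictly shrinks. For $N_1\ne\emptyset$: each column removal "charges" a total normalized $\ell_1$-mass of at least $(60\log n)^{-1/2}$ spread among the $K_2$-rows, but each $K_2$-row has total normalized $\ell_1$-mass at most $\sqrt{|\supp(v_i)|}\le\sqrt{2k}$ by Cauchy–Schwarz (since $\|v_i\|_2$-normalized, $\|\cdot\|_1\le\sqrt{\#\text{nonzeros}}$), and furthermore a $K_2$-row, by definition, has fewer than $\ceil{10\log n}$ magnitudes on $N_1$, which by the argument in \cref{lem:many-magnitudes} forces its $\ell_1$-mass to be concentrated, allowing a sharper bound. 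The total number of columns removable is then at most $k\cdot(\text{per-row }\ell_1\text{-budget})\cdot(60\log n)^{1/2}$, and plugging in $k\le 10^{-2}n^{2/3}/(\log n)^{2/3}$ shows this is strictly less than $n$, so $N_1$ stays nonempty.

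The main obstacle is the bookkeeping in the previous paragraph: one has to bound the $\ell_1$-norm of a normalized vector that has few magnitudes but possibly many nonzero entries (a vector can have all entries of the same magnitude yet have $\ell_1$-norm as large as $\sqrt{\text{number of entries}}\approx\sqrt{2k}$), so the naive budget gives removable columns $\lesssim k^{3/2}(\log n)^{1/2}$, which against $n$ forces precisely $k\gtrsim n^{2/3}/(\log n)^{1/3}$ — essentially the target. Getting the logarithmic factor and the constant $10^{-2}$ right, and making sure the "few magnitudes" rows are handled so that they genuinely end up in $K_3$ rather than leaking into $K_2$, is where the care is needed; everything else is a straightforward greedy-removal argument.
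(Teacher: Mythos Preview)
Your iterative column-peeling strategy is exactly the one the paper uses, and your identification of $K_1,K_2,K_3$ at termination is correct. However, the charging argument you sketch has a genuine gap: you treat the per-row normalized $\ell_1$-mass as a fixed budget of size at most $\sqrt{2k}$, against which each removed column is charged at least $(60\log n)^{-1/2}$. But when a column is removed from $N_1$, the $\ell_2$-norm of $v_i|_{N_1}$ drops and the row is \emph{re}-normalized, which can make the remaining $\ell_1$-mass \emph{larger} than it was before. Concretely, take $v_i|_{N_1}=(M,1,1,\dots,1)$ with $2k-1$ ones and $M\gg\sqrt{2k}$: the normalized $\ell_1$-norm is roughly $1+(2k-1)/M$, yet after removing the first column it jumps to $\sqrt{2k-1}$. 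So the ``budget'' is not static, and your count of removable columns does not go through as stated.

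The paper resolves this by replacing the actual normalized entries with proxy weights $w_{ij}=10/\sqrt{N_m(i)}$, where $N_m(i)$ is the number of entries of $v_i|_{N_1}$ currently of the same magnitude $m$ as $v_{ij}$. These weights dominate $|v'_{ij}|$ (so a column with large $\ell_1$-norm in $V'$ also has large total weight), and crucially the total weight ever assigned in row $i$ can be tracked by a telescoping sum: starting from the moment $i$ first becomes active, the contribution of magnitude $m$ is at most $20\sqrt{N^*_m(i)}$, and summing over the at most $S=\lceil 10\log n\rceil-1$ magnitudes present gives, via Cauchy--Schwarz and $|\supp(v_i)|\le 2k$, a per-row budget of $30\sqrt{kS}$. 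This extra $\sqrt{S}\sim\sqrt{\log n}$ is precisely the cost of handling renormalization; it combines with the $\sqrt{60\log n}$ from the column threshold to give the factor $\log n$ (not $(\log n)^{1/2}$) in the column count, and hence the exponent $(\log n)^{2/3}$ in the hypothesis rather than the $(\log n)^{1/3}$ your naive computation suggests. Without a device of this kind, the peeling argument does not close.
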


\begin{figure}
    \centering
    \begin{tikzpicture}[scale = 0.8]
    \draw (0,0) -- (0,5);
    \draw (0,0) -- (0.2,0);
    \draw (0,5) -- (0.2,5);
    
    \draw (8, 0) -- (8,5);
    \draw (7.8, 0) -- (8,0);
    \draw (7.8, 5) -- (8,5);
    
    \draw (5.5,0) -- (5.5,5);
    
    \draw (0,4) -- (5.5,4);
    \draw (0,1.5) -- (5.5,1.5);

    \node at (-0.5, 4.5) {$K_1$};
    \node at (-0.5, 3) {$K_2$};
    \node at (-0.5, 1) {$K_3$};

    \node at (2.5, -0.5) {$N_1$};
    \node at (6.75, -0.5) {$N_2$};

    \node at (2.5, 4.5) {$0$};
    \node[scale=0.9] at (2.75, 3.25) {rows can be rescaled s.t.};
    \node[scale=0.9] at (2.75, 2.75) {row $\ell_2$-norm $1$};
    \node[scale=0.9] at (2.75, 2.25) {col $\ell_1$-norm $\le (60\log n)^{-1/2}$};
    \node[scale=0.9] at (2.75, 0.9) {$\ge \ceil{10\log n}$ magnitudes};
    \node[scale=0.9] at (2.75, 0.4) {per row};

    \draw [decorate,decoration={brace,amplitude=5pt}] (0,5.2) -- (5.5,5.2);
    \node at  (2.75, 5.75) {$\ne \emptyset$};
    
    \end{tikzpicture}
    \caption{The decomposition of the coefficient matrix $V$ of the essential cover}
    \label{fig:decomposition}
\end{figure}
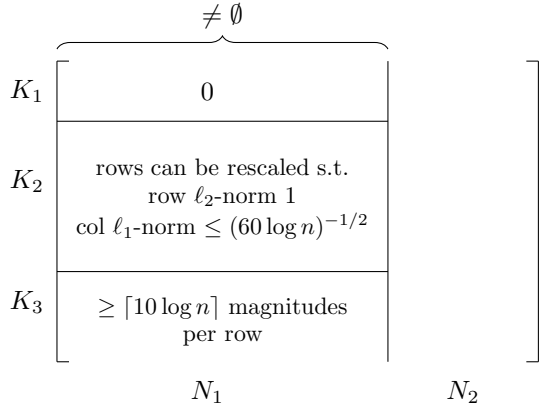

\cref{fig:decomposition} illustrates the decomposition in \cref{prop:decomposition}. We remark that in \cref{prop:decomposition}, some of the subsets $K_1,K_2,K_3\subseteq [k]$ may be empty, and similarly $N_2\subseteq [n]$ may be empty.

Given an essential cover whose coefficient matrix can be decomposed as in \cref{prop:decomposition}, we show that there must exist a vertex in $\signbin^n$ that is not covered by any of the hyperplanes (yielding a contradiction to condition (E1)). This is stated in the following proposition.

\begin{prop}\label{prop:find-vertex}
    For $n\ge 2$, let $h_1,\dots,h_k$ be hyperplanes in $\R^n$ forming an essential cover of the hypercube $\signbin^n$. For each $i\in [k]$, let $h_i=\{x\in \R^n \mid \ang{v_i,x} = \mu_i\}$, and let $V\in \R^{k\times n}$ be the matrix with rows $v_1,\dots,v_k$. Suppose that the matrix $V$ has a decomposition as in \cref{prop:decomposition} and that $k\le n$. Then there exists $w\in \signbin^n$ such that $\ang{v_i, w}\ne \mu_i$ for all $i\in [k]$.
\end{prop}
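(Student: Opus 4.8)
The plan is to use the decomposition $[k]=K_1\sqcup K_2\sqcup K_3$ and $[n]=N_1\sqcup N_2$ to construct a random vertex $w\in\signbin^n$ that, with positive probability, avoids all the hyperplanes. The key idea is to split the coordinates of $w$ as $w=(w^{(1)},w^{(2)})$ with $w^{(1)}\in\signbin^{N_1}$ and $w^{(2)}\in\signbin^{N_2}$, and to choose $w^{(2)}$ (deterministically, in the worst case, or adversarially) and the marginal biases on $w^{(1)}$ so that for each row $i$ the quantity $\ang{v_i,w}-\mu_i$ is controlled by the $N_1$-part of the row. Concretely, for $i\in K_1$ the row $v_i$ is supported inside $N_2$ by \cref{item:K1N1zero}, so $\ang{v_i,w}=\ang{v_i|_{N_2},w^{(2)}}$ depends only on $w^{(2)}$; we will arrange via the Bang-type \cref{cor:bang} that this is bounded away from $\mu_i$. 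For $i\in K_2\cup K_3$ the plan is to show that, after conditioning on $w^{(2)}$, the random variable $\ang{v_i|_{N_1},w^{(1)}}$ is spread out enough (either via anticoncentration from many magnitudes, \cref{lem:many-magnitudes}, for rows in $K_3$, or via Hoeffding-type concentration combined with the Bang point for rows in $K_2$) that it misses the single target value $\mu_i-\ang{v_i|_{N_2},w^{(2)}}$ with high enough probability.

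First I would apply \cref{cor:bang} to the rescaled matrix $V'$ from \cref{item:norm}: taking $\ell=|K_2|$, $m=|N_1|$, and $\theta$ on the order of $(60\log n)^{-1/2}\cdot(\dots)$ so that $\theta\|v'_{*j}\|_1\le 1/3$ holds (which is exactly the column $\ell_1$-norm bound in \cref{item:norm}, up to the choice of $\theta\le 1/3$), I obtain a vector $y\in\R^{N_1}$ with $\|y\|_\infty\le 1/3$ and $|\ang{v'_i,y}-\mu'_i|\ge\theta$ for all $i\in K_2$, where $\mu'_i$ is the appropriately rescaled right-hand side. This $y$ will serve as the vector of biases: I set $p_j=(1+y_j)/2$ for $j\in N_1$, which lies in $[1/3,2/3]$ because $\|y\|_\infty\le 1/3$, and let $w^{(1)}_j$ be $\pm1$ with $\Pr[w^{(1)}_j=1]=p_j$, independently. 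Then $\E[\ang{v_i|_{N_1},w^{(1)}}]=\ang{v_i|_{N_1},y}$, so for $i\in K_2$ the expected value of $\ang{v_i,w}$ is a controlled distance $\ge\theta$ (after unscaling) from $\mu_i$ whenever $v_i|_{N_2}$ contributes the right offset — here I will need to also handle the $N_2$-part, either by absorbing $\ang{v_i|_{N_2},w^{(2)}}$ into $\mu_i$ from the start (running \cref{cor:bang} with the shifted right-hand side) or by noting it is a fixed number once $w^{(2)}$ is fixed. By Hoeffding's inequality (\cref{lem:hoeffding}), $\ang{v_i|_{N_1},w^{(1)}}$ concentrates around its mean with deviations on the scale $\|v_i|_{N_1}\|_2=1$, and since $|K_2|\le k$ is small (subpolynomial), a union bound shows that with probability close to $1$ every $i\in K_2$ has $\ang{v_i,w}$ far from $\mu_i$.

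For the rows in $K_3$, \cref{item:many-magnitudes} guarantees each row of $V[K_3\times N_1]$ has at least $\ceil{10\log n}$ magnitudes, so \cref{lem:many-magnitudes} applied with the biased distribution on $w^{(1)}$ (biases in $[1/3,2/3]$ as arranged) gives $\Pr[\ang{v_i|_{N_1},w^{(1)}}=\alpha_i]\le(2/3)^{\ceil{5\log n}}$ for the single bad value $\alpha_i=\mu_i-\ang{v_i|_{N_2},w^{(2)}}$; since $(2/3)^{5\log n}=n^{-5\log(3/2)}$ is much smaller than $1/k\ge 1/n$, a union bound over $K_3$ contributes at most a tiny failure probability. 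For $K_1$, the value $\ang{v_i,w}=\ang{v_i|_{N_2},w^{(2)}}$ is entirely determined by $w^{(2)}$, and I will choose $w^{(2)}$ so that these are all nonzero — this is the part that requires care, since \cref{cor:bang} was used for $K_2$, and I expect the real work is to set things up so a single application (or a compatible pair of applications) of Bang's lemma simultaneously keeps the $K_1$-rows and $K_2$-rows away from their targets. The cleanest route is probably to combine $K_1$ and $K_2$ when invoking \cref{cor:bang}: normalize those rows, restrict attention to the columns $N_2$ for the $K_1$-part and $N_1$ for the $K_2$-part, and check the column $\ell_1$-norm hypothesis — but the norm control in \cref{item:norm} is only stated for $K_2\times N_1$, so more likely $w^{(2)}$ is handled by a separate, simpler argument (e.g. rows in $K_1$ also have few nonzero coefficients by \cref{lem:row-support}, or the decomposition is built so that $K_1$-rows are automatically fine). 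The main obstacle, and the step I would spend the most care on, is making the choice of $w^{(2)}$ and the biased distribution on $w^{(1)}$ mutually compatible so that all three families $K_1,K_2,K_3$ are simultaneously avoided; once the quantitative inputs ($\theta\asymp(\log n)^{-1/2}$, $|K_2|$ subpolynomial, the exponential anticoncentration for $K_3$) are lined up, the final union bound over all $k\le n$ rows closes the argument and contradicts (E1).
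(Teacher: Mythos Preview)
Your overall architecture is right, and the treatment of $K_2$ (Bang $\to$ biased rounding $\to$ Hoeffding) and of $K_3$ (\cref{lem:many-magnitudes}) matches the paper. But there are two genuine gaps.

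\textbf{Handling $K_1$.} You correctly identify this as the sticking point, but none of your suggested fixes work. There is no norm control on $V[K_1\times N_2]$, so you cannot fold the $K_1$-rows into the Bang application, and nothing in the decomposition makes the $K_1$-rows ``automatically fine''. The missing idea is to use condition (E3) of an essential cover. Since $N_1\ne\emptyset$ and $V[K_1\times N_1]=0$, condition (E2) forces $K_1\ne[k]$; now apply (E3) to any index in $[k]\setminus K_1$ to get a vertex $x\in\signbin^n$ with $\ang{v_i,x}\ne\mu_i$ for all $i\in K_1$, and set $w^{(2)}=x|_{N_2}$. Because $v_i|_{N_1}=0$ for $i\in K_1$, this deterministically guarantees $\ang{v_i,w}=\ang{v_i|_{N_2},w^{(2)}}\ne\mu_i$ for every $i\in K_1$, regardless of $w^{(1)}$. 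With $w^{(2)}$ fixed first, you then run \cref{cor:bang} on $V[K_2\times N_1]$ with the shifted targets $\mu_i'=\mu_i-\ang{v_i|_{N_2},w^{(2)}}$, exactly as you anticipated.

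\textbf{The size of $\theta$.} You have the scale of $\theta$ inverted: you write $\theta\asymp(\log n)^{-1/2}$, but the argument needs $\theta$ \emph{large}, of order $(\log n)^{1/2}$. The column bound $\|v'_{*j}\|_1\le(60\log n)^{-1/2}$ in \cref{item:norm} is precisely what lets you take $\theta=(6\log n)^{1/2}$ while still satisfying $\theta\|v'_{*j}\|_1\le 1/3$ in \cref{cor:bang}. With this $\theta$, Hoeffding gives $\Pr[\ang{v_i|_{N_1},w^{(1)}}=\mu_i']\le 2\exp(-\theta^2/2)=2n^{-3}\le 1/(2n)$ for each $i\in K_2$; with your $\theta\asymp(\log n)^{-1/2}$ the Hoeffding bound is $2\exp(-\Theta(1/\log n))$, which is useless. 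Relatedly, you do not need $|K_2|$ to be subpolynomial: the union bound over all of $K_2\cup K_3$ uses only $|K_2\cup K_3|\le k\le n$ against a per-row bound of $1/(2n)$.
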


In other words, this proposition states that actually the coefficient matrix of an essential cover cannot have a decomposition as in \cref{prop:decomposition}. This means that we cannot have $k\le 10^{-2}\cdot n^{2/3}/(\log n)^{2/3}$.

The proof of \cref{prop:find-vertex} employs a similar strategy to \cite{araujo2022,YehudaY21}: we find a certain probability distribution on $\signbin^n$ such that with positive probability a random vertex $w\in \signbin^n$ with this distribution avoids all the hyperplanes. To do so, we first fix the coordinates of $w$ with indices in $N_2$  deterministically using the minimality property of an essential cover (property (E3) in \cref{def:essential-cover}) to always avoid all the hyperplanes $h_i$ with $i\in K_1$. Then we construct probability distributions for the coordinates of $w$ with indices in $N_1$ such that with positive probability $w$ avoids all the hyperplanes $h_i$ with $i\in K_2\cup K_3$. The probability distribution is constructed by applying \cref{cor:bang} (a corollary of Bang's lemma) to the matrix $V'$ in condition (2) in \cref{prop:decomposition}. This way, we find a point $y\in [-1/3,1/3]^{N_1}$ that is ``far away'' from all hyperplanes $h_i$ with $i\in K_2$. We then randomly round $y$ to a vertex in $\signbin^{N_1}$ of the hypercube to define coordinates of $w$ with indices in $N_1$. We can show that for any $i\in K_2$, the vertex $w$ is unlikely to be contained in the hyperplane $h_i$ because $y$ is ``far away'' from $h_i$. To bound the probability that $w$ is contained in a hyperplane $h_i$ with $i\in K_3$, we use that by condition (3) in \cref{prop:decomposition} every row of the submatrix $V[K_3\times N_1]$ has many magnitudes. So for each $i\in K_3$, \cref{lem:many-magnitudes} implies an upper bound for the probability that $w$ is contained in $h_i$.

Despite the similarity in the overall strategy between our argument and the arguments in \cite{araujo2022,YehudaY21}, we highlight the fact that we have a simpler matrix decomposition in \cref{prop:decomposition} and a much shorter proof for \cref{prop:find-vertex}, while obtaining a better bound. The proof of \cref{thm:bound} follows directly from combining \cref{prop:decomposition,prop:find-vertex}. 

\begin{proof}[Proof of \cref{thm:bound}]
Suppose for contradiction that the hyperplanes $h_1,\dots, h_k$ form an essential cover of the hypercube $\signbin^{n}$ of size $k < 10^{-2}\cdot n^{2/3}/(\log n)^{2/3}$. For each $i\in [k]$, let $h_i=\{x\in \R^n \mid \ang{v_i,x} = \mu_i\}$, and let $V\in \R^{k\times n}$ be the matrix with rows $v_1,\dots,v_k$. Then, we can decompose $V$ as in \cref{prop:decomposition}. But now by \cref{prop:find-vertex}, there must be a vertex $w\in \signbin^n$ that is not covered by any of the hyperplanes $h_1,\dots,h_k$. This contradicts condition (E1) in the definition of an essential cover in Definition \ref{def:essential-cover}.
\end{proof}

The rest of the paper is structured as follows. In \cref{sec:find-vertex}, we show that for an essential cover whose coefficient matrix has a decomposition as in \cref{prop:decomposition} there must be a vertex $w\in \signbin^n$ that is not covered by any of the hyperplanes, i.e.\ we prove \cref{prop:find-vertex}. Then in \cref{sec:decompose}, we prove \cref{prop:decomposition}.
\section{Finding the Uncovered vertex}\label{sec:find-vertex}

In this section, we prove \cref{prop:find-vertex}, i.e.\ we show that given an essential cover of the hypercube $\signbin^{n}$ whose the coefficient matrix $V\in \R^{k\times n}$ can be decomposed as in \cref{prop:decomposition}, we can find a vertex $w\in \signbin^n$ that is not covered by any of the hyperplanes. For partitions $[k] = K_1\sqcup K_2\sqcup K_3$ and  $[n] = N_1\sqcup N_2$ as in \cref{prop:decomposition}, we construct $w\in \signbin^n$ by first fixing the entries $w_j$ for $j\in N_2$ to avoid all the hyperplanes $h_i$ with $i\in K_1$ deterministically, and then probabilistically choosing the entries $w_j$ for $j\in N_1$, avoiding all the hyperplanes $h_i$ with $i\in K_2\cup K_3$ with positive probability.

\begin{proof}[Proof of \cref{prop:find-vertex}]
Let $[k] = K_1\sqcup K_2\sqcup K_3$ and  $[n] = N_1\sqcup N_2$ be partitions satisfying the conditions in \cref{prop:decomposition}. Note that these conditions are not affected by rescaling the hyperplane equations $\ang{v_i,x} = \mu_i$ describing the hyperplanes $h_i$ for $i\in K_2$. We may therefore assume that the matrix $V'$ in condition (2) in \cref{prop:decomposition} is simply $V'=V[K_2\times N_1]$ after rescaling the equations $\ang{v_i,x} = \mu_i$ for $i\in K_2$ appropriately (note that this precisely corresponds to rescaling the rows of $V$ with indices in $K_2$). Specifically, this means that we have $||v_i|_{N_1}||_2 = 1$ for each $i\in K_2$, and furthermore, for each $j\in N_1$, the $j$-th column $v_{*j}$ of $V$ satisfies $||v_{*j}|_{K_2}||_1\le (60\log n)^{-1/2}$. 

Next, we claim that $K_1\ne [k]$. Indeed, condition (E2) in \cref{def:essential-cover} means that the matrix $V$ cannot have any column with all entries being zero. On the other hand, by condition (1) in \cref{prop:decomposition} the submatrix $V[K_1\times N_1]$ of $V$ is the zero matrix. Therefore, if we had $K_1= [k]$, then for every $j\in N_1$ the $j$-th column of $V$ would be all-zero. This would be a contradiction, given that $N_1\ne \emptyset$. Thus, we indeed have $K_1\ne [k]$.

Now, we can fix the entries $w_j$ for $j\in N_2$ of our desired vertex $w\in \signbin^n$ using the following lemma.

\begin{lemma}\label{lem:fix-N1}
There exists $\Tilde{w}\in \signbin^{N_2}$ such that for every $w\in \signbin^n$ with $w|_{N_2} = \Tilde{w}$, we have $\ang{v_i, w} \ne \mu_i$ for all $i\in K_1$.  
\end{lemma}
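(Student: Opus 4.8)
The plan is to use the minimality condition (E3) for each hyperplane $h_i$ with $i \in K_1$ to pin down, one coordinate at a time, the values $\tilde w_j$ for $j \in N_2$, ensuring none of these hyperplanes gets hit. The key observation is that, by condition (1) of \cref{prop:decomposition}, for $i \in K_1$ the row $v_i$ is supported entirely inside $N_2$; hence the value $\ang{v_i, w}$ depends only on $w|_{N_2}$, and requiring $\ang{v_i, w} \ne \mu_i$ is a condition purely on $w|_{N_2}$.

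The mechanism I would use is the following. Fix $i \in K_1$. By (E3) there is a vertex $x^{(i)} \in \signbin^n$ covered \emph{only} by $h_i$ among $h_1, \dots, h_k$. I claim one can read off from $x^{(i)}$ a good choice of $\tilde w|_{N_2}$ relative to $h_i$: concretely, since $x^{(i)} \in h_i$ but $x^{(i)} \notin h_{i'}$ for all $i' \ne i$, consider flipping a single coordinate of $x^{(i)}$. If we flip a coordinate $j \in N_2$ with $v_{ij} \ne 0$ (such $j$ exists unless $v_i = 0$, and one handles $v_i = 0$ separately — if $\mu_i \ne 0$ the constraint $\ang{v_i,w}\ne\mu_i$ is automatic, and if $\mu_i = 0$ then $h_i$ would contain \emph{every} vertex, contradicting (E3) for any $i' \ne i$ unless $k = 1$, a case easily dispatched), the resulting vertex $y$ satisfies $\ang{v_i, y} = \ang{v_i, x^{(i)}} - 2 v_{ij} \cdot x^{(i)}_j \ne \mu_i$, so $y \notin h_i$. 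I would then want to argue that flipping coordinates within $N_2$ only cannot bring $y$ into any other $h_{i'}$ — but this is false in general, so instead the cleaner route is: define $\tilde w|_{N_2}$ by perturbing $x^{(i)}|_{N_2}$ in exactly one $N_2$-coordinate where $v_i$ is nonzero, giving a vertex $y$ with $y|_{N_2} = \tilde w|_{N_2}$ and $\ang{v_i, y} \ne \mu_i$; then since $\ang{v_i, \cdot}$ depends only on the restriction to $N_2$, \emph{every} $w$ with $w|_{N_2} = \tilde w|_{N_2}$ avoids $h_i$.

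The real point, and the main obstacle, is that this must be done \emph{simultaneously} for all $i \in K_1$, and the single-coordinate flips for different $i$ might conflict. The resolution is to observe that the constraints decouple over disjoint support sets, or — more robustly — to set up an iterative/greedy argument: process the indices $i \in K_1$ one at a time; maintain a partially-specified $\tilde w$ on a growing subset $T \subseteq N_2$; when handling $i$, if $\supp(v_i) \cap N_2 \subseteq T$ is already covered, check whether $\ang{v_i,\cdot}$ is already forced away from $\mu_i$ (here is where a parity or magnitude-type pigeonhole may be needed to guarantee we never get trapped), and otherwise fix one fresh coordinate in $\supp(v_i) \cap N_2 \setminus T$ to steer the partial sum away from $\mu_i$, which is always possible since changing that single $\pm 1$ entry changes the partial inner product by a nonzero amount $2|v_{ij}|$ and the two possible resulting values cannot both equal $\mu_i$. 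I expect the paper's actual proof does something slicker using (E3) directly — perhaps simply taking $\tilde w := x^{(i_0)}|_{N_2}$ for a single well-chosen $i_0$, or using that the $h_i$, $i \in K_1$, restricted to the subcube fixed on the already-determined coordinates, still form a cover with the minimality property — and I would look for such a one-shot argument first before falling back on the greedy scheme. In any case, the heart of the matter is leveraging minimality (E3) to get a deterministic, conflict-free choice on $N_2$; once that is in hand the lemma follows since rows indexed by $K_1$ see only the $N_2$-coordinates.
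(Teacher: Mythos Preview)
Your proposal overcomplicates the argument and leaves a genuine gap in the greedy fallback. The paper's proof is a one-liner: apply (E3) to any index $i_0 \in [k] \setminus K_1$ (such an index exists since $K_1 \ne [k]$, as established just before the lemma). This yields a vertex $x \in \signbin^n$ covered only by $h_{i_0}$; in particular $x \notin h_i$ for every $i \in K_1$. Set $\tilde{w} := x|_{N_2}$, and since each $v_i$ with $i \in K_1$ is supported in $N_2$, the conclusion is immediate.

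The twist you missed is to apply (E3) to an index \emph{outside} $K_1$ rather than inside. Your main approach applies (E3) to each $i \in K_1$, obtaining a vertex $x^{(i)}$ that lies \emph{on} $h_i$, which then forces you to flip a coordinate and worry about landing on other hyperplanes. You do anticipate a slicker route --- ``perhaps simply taking $\tilde w := x^{(i_0)}|_{N_2}$ for a single well-chosen $i_0$'' --- but you never identify that the right choice is any $i_0 \notin K_1$; with that observation no flipping, no greedy scheme, and no conflict resolution is needed at all.

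As for the greedy fallback: the case you flag (``if $\supp(v_i) \cap N_2 \subseteq T$ is already covered, check whether $\langle v_i, \cdot \rangle$ is already forced away from $\mu_i$'') is a real obstruction, and you give no argument for why one is never trapped there. Invoking ``a parity or magnitude-type pigeonhole'' is not a proof. So the proposal as written does not establish the lemma.
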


\begin{proof}
By condition (E3) in \cref{def:essential-cover} applied to some element of $[k]\setminus K_1$, there exists a vertex $x\in \signbin^n$ such that $\ang{v_i, x} \ne \mu_i$ for all $i\in K_1$. We set $\Tilde{w} := x|_{N_2}$. By condition (1) in \cref{prop:decomposition}, for every $i\in K_1$ the restriction $v_i|_{N_1}$ is the all-zero vector. Therefore for every $w\in \signbin^n$ with $w|_{N_2} = \Tilde{w}$, we have
\[\ang{v_i, w} = \ang{v_i|_{N_2}, \Tilde{w}} = \ang{v_i|_{N_2}, x|_{N_2}} = \ang{v_i, x}\ne \mu_i\]
for all $i\in K_1$, as desired.
\end{proof}

Let $\Tilde{w}\in \signbin^{N_2}$ be as in \cref{lem:fix-N1}, and fix $w_j = \Tilde{w}_j$ for all $j\in N_2$. Next, we define a probability distribution for the entries $w_j$ for $j\in N_1$. To do so, we apply \cref{cor:bang} to the matrix $V[K_2\times N_1]$ with $\theta = (6\log n)^{1/2}$ and $\mu'\in \R^{K_2}$ defined by $\mu'_i = \mu_i - \ang{v_i|_{N_2}, \Tilde{w}}$ for all $j\in K_2$. We have $\theta ||v_{*j}|_{K_2}||_1\le (6\log n)^{1/2}\cdot (60\log n)^{-1/2}\le 1/3$ for all $j\in N_1$, as well as $||v_i|_{N_1}||_2 = 1$ for all $i\in K_2$, so the conditions of \cref{cor:bang} are satisfied for the matrix $V[K_2\times N_1]$. So we obtain $y\in \R^{N_1}$ such that $||y||_\infty\le 1/3$ and $\labs{\ang{v_i|_{N_1}, y} - \mu_i'} \ge \theta$ for all $i\in K_2$. Then for every $j\in N_1$, let us take a random variable $w_j\in \signbin$ with distribution
\begin{equation}\label{eq:def-w}
    w_j = \begin{cases} 1 & \text{with probability } (1+y_j)/2 \\ -1 & \text{with probability } (1-y_j)/2, \end{cases}
\end{equation}
such that the random variables $w_j$ are independent for all $j\in N_1$. Note that then we have $\E[w_j] = y_j$ for each $j\in N_1$.

Taking the entries of $w$ to be the random variables $w_j$ for $j\in N_1$ together with the fixed values $w_j=\Tilde{w}_j$ for $j\in N_2$ defined above, we have now defined a random vertex $w\in \signbin^n$. Our goal is to show that with positive probability $w\in \signbin^n$ satisfies $\ang{v_i, w} \ne \mu_i$ for all $i\in [k]$. By the choice of $\Tilde{w}$ in \cref{lem:fix-N1} we always have $\ang{v_i, w} \ne \mu_i$ for all $i\in K_1$. Next, we show that for $i\in K_2$ we are unlikely to have $ \ang{w, v_i} = \mu_i$.

\begin{claim}\label{claim:avoid-K2}
For every $i\in K_2$, we have
\[ \Pr \Big[\ang{v_i,w} = \mu_i\Big] \le  \frac{1}{2n}.\]
\end{claim}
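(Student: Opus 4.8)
The plan is to bound $\Pr[\ang{v_i,w}=\mu_i]$ for a fixed $i\in K_2$ by splitting the inner product into its random part (coordinates in $N_1$) and its fixed part (coordinates in $N_2$). Recall we fixed $w_j=\Tilde w_j$ for $j\in N_2$, so $\ang{v_i,w}=\ang{v_i|_{N_1},w|_{N_1}}+\ang{v_i|_{N_2},\Tilde w}$. Hence the event $\ang{v_i,w}=\mu_i$ is exactly the event $\ang{v_i|_{N_1},w|_{N_1}}=\mu_i-\ang{v_i|_{N_2},\Tilde w}=\mu_i'$, where $\mu_i'$ is the quantity we plugged into \cref{cor:bang}. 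So it suffices to show $\Pr[\ang{v_i|_{N_1},w|_{N_1}}=\mu_i']\le 1/(2n)$.

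Next I would bring in the two facts we have about $i\in K_2$: first, $\E[w_j]=y_j$ for $j\in N_1$, so $\E[\ang{v_i|_{N_1},w|_{N_1}}]=\ang{v_i|_{N_1},y}$; and second, from \cref{cor:bang} we have $|\ang{v_i|_{N_1},y}-\mu_i'|\ge\theta=(6\log n)^{1/2}$. Therefore the event $\ang{v_i|_{N_1},w|_{N_1}}=\mu_i'$ forces the random variable $z:=\ang{v_i|_{N_1},w|_{N_1}}$ to deviate from its mean by at least $\theta$. Now I apply Hoeffding's inequality (\cref{lem:hoeffding}) to $z=\sum_{j\in N_1}v_{ij}w_j$: each summand $v_{ij}w_j$ ranges in an interval of length $2|v_{ij}|$, so $\sum_{j\in N_1}(b_j-a_j)^2=4\sum_{j\in N_1}v_{ij}^2=4\|v_i|_{N_1}\|_2^2=4$, using the normalization $\|v_i|_{N_1}\|_2=1$. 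This gives
\[\Pr[\ang{v_i,w}=\mu_i]\le\Pr\big[|z-\E[z]|\ge\theta\big]\le 2\exp\!\left(-\frac{2\theta^2}{4}\right)=2\exp\!\left(-\frac{6\log n}{2}\right)=2n^{-3}\le\frac{1}{2n}\]
for $n\ge 2$ (since $2n^{-3}\le 1/(2n)$ is equivalent to $n^2\ge 4$).

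I expect this proof to be essentially routine — the only mildly delicate points are making sure the reduction to the $N_1$-coordinates is clean (which is immediate from how $\mu_i'$ was defined) and checking that the variance bound in Hoeffding is exactly $4$ because of the $\ell_2$-normalization of the rows of $V'=V[K_2\times N_1]$. There is no real obstacle here; the substantive work was already done in invoking \cref{cor:bang} to produce a point $y$ far from all the hyperplanes indexed by $K_2$, and this claim simply converts that geometric separation into a probability bound via a standard concentration argument.
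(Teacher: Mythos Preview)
Your proof is correct and matches the paper's argument essentially step for step: reduce to the $N_1$-coordinates via the definition of $\mu_i'$, use $\E[w_j]=y_j$ together with $|\ang{v_i|_{N_1},y}-\mu_i'|\ge\theta$ to convert the event into a deviation of at least $\theta$ from the mean, and apply Hoeffding with $\sum_{j\in N_1}(2|v_{ij}|)^2=4\|v_i|_{N_1}\|_2^2=4$ to obtain $2e^{-\theta^2/2}=2/n^3\le 1/(2n)$.
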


\begin{proof}
Fix $i\in K_2$. By our definition of $\mu'_i=\mu_i - \ang{v_i|_{N_2}, \Tilde{w}}=\mu_i - \ang{v_i|_{N_2}, w|_{N_2}}$, we have $\ang{v_i,w} = \mu_i$ if and only if $\ang{v_i|_{N_1},w|_{N_1}} = \mu_i'$. So it suffices to show that 
\[\Pr \Big[\ang{ v_i|_{N_1}, w|_{N_1}} = \mu_i'\Big] \le \frac{1}{2n}.\]

Since $\E[w_j] = y_j$ for all $j\in N_1$, by linearity of expectation we have $\E[\ang{v_i|_{N_1},w|_{N_1}}] = \ang{v_i|_{N_1},y}$. In addition, recall that we have $\labs{\ang{v_i|_{N_1},y} - \mu_i'}\ge \theta$, so we obtain
\begin{align*}
    \Pr\Big[\ang{v_i|_{N_1},w|_{N_1}} = \mu_i'\Big] &\le \Pr\Big[\big|\ang{ v_i|_{N_1},w|_{N_1}} - \ang{v_i|_{N_1},y}\big|\ge \theta \Big] = \Pr\Big[\big|\ang{v_i|_{N_1},w|_{N_1}} - \E[\ang{v_i|_{N_1},w|_{N_1}}]\big|\ge \theta\Big].
\end{align*}
Now, we can apply Hoeffding's inequality (\cref{lem:hoeffding}) to the random variables $v_{ij}w_j$ for $j\in N_1$, whose sum is $\sum_{j\in N_1}v_{ij}w_j=\ang{v_i|_{N_1},w|_{N_1}}$. Note that the random variables $v_{ij}w_j$ are independent and bounded by $-|v_{ij}|\le v_{ij}w_j\le |v_{ij}|$ for each $j\in N_1$. Since we have $||v_i|_{N_1}||_2 = 1$, we obtain
\begin{align*}
    \Pr\Big[\big|\ang{v_i|_{N_1},w|_{N_1}} - \E[\ang{v_i|_{N_1},w|_{N_1}}]\big|\ge \theta\Big] & \le 2\cdot \exp \lpr{-\frac{2\theta^2}{\sum_{j\in N_1} (2v_{ij})^2}}\\
    & = 2\cdot \exp \lpr{-\frac{2\theta^2}{4 ||v_i|_{N_1}||_2^2}}\\
    &=2\cdot \exp \lpr{-\frac{\theta^2}{2}}.
\end{align*}
Substituting in our choice of $\theta = (6\log n)^{1/2}$, we can deduce (recalling $n\ge 2$)
\[\Pr \Big[\ang{v_i|_{N_1},w|_{N_1}} = \mu_i'\Big] \le 2\exp (-3\log n) =  \frac{2}{n^3} \le \frac{1}{2n}.\qedhere\]
\end{proof}

On the other hand, by condition (3) in \cref{prop:decomposition}, for every $i\in K_3$, the vector $v_i|_{N_1}$ has at least $\ceil{10\log n}$ magnitudes. Thus, by \cref{lem:many-magnitudes} applied to $p\in [1/3,2/3]^{N_1}$ given by $p_j=(1+y_j)/2$ for all $j\in N_1$, we have 
\[\Pr\Big[\ang{v_i,w} = \mu_i \Big] =\Pr\Big[ \ang{v_i|_{N_1},w|_{N_1}} = \mu_i -\ang{v_i|_{N_2},\Tilde{w}}\Big]\le \lpr{\frac{2}{3}}^{\ceil{10\log n /2}}\le \lpr{\frac{2}{3}}^{5\log n}\le \frac{1}{n^2}\le \frac{1}{2n}\]
for every $i\in K_3$. Together with \cref{claim:avoid-K2}, we can take a union bound over all $i\in K_2\cup K_3$ and obtain
\[\Pr\Big[\ang{v_i,w} = \mu_i \text{ for some }i\in K_2\cup K_3\Big]\le \sum_{i\in K_2\cup K_3}\Pr\Big[ \ang{v_i,w} = \mu_i \Big]\le k\cdot \frac{1}{2n}\le n\cdot \frac{1}{2n} = \frac{1}{2},\]
recalling that $k\le n$.

Thus, with probability at least $1/2$, our random vertex $w\in \signbin^n$ satisfies $\ang{v_i, w} \ne \mu_i$ for all $i\in K_2\cup K_3$. Recalling that we always have $\ang{v_i, w} \ne \mu_i$ for all $i\in K_1$, this means that with probability at least $1/2$, we have $\ang{v_i, w} \ne \mu_i$ for all $i\in [k]$. Hence there exists a vertex in $w\in\signbin^n$ such that $\ang{v_i, w} \ne \mu_i$ for all $i\in [k]$.
\end{proof}

\section{Decomposing the Matrix}\label{sec:decompose}

Finally, we prove \cref{prop:decomposition}.
\begin{proof}[Proof of \cref{prop:decomposition}]
We decompose the matrix $V$ using the following algorithm that starts with all the column indices in $N_1$. At any iteration of the algorithm, we have a partition $[n] = N_1\sqcup N_2$ of the column indices and furthermore the row indices $i\in [k]$ are partitioned into three types: being in $K_1$, active, and inactive (defined later). Throughout the algorithm, we move certain columns from $N_1$ to $N_2$. When the algorithm terminates, the desired partitions of the row and column indices of the matrix will be formed by taking the sets $K_1$, $N_1$ and $N_2$ at the end of the algorithm, as well as taking $K_2$ to be the active row indices and $K_3$ to be the inactive row indices at the end of the algorithm.

\begin{enumerate}[label=\arabic*.]
    \item Initialize $N_1 = [n],\, N_2= \emptyset$ and $K_1= \emptyset$.

    \item For each $i\in [k]\setminus K_1$ such that $v_i|_{N_1}$ is the all-zero vector, we move $i$ into $K_1$.

    \item For each $i\in [k]\setminus K_1$, we say that $i$ is \emph{inactive} if the vector $v_i|_{N_1}$ has at least $\ceil{10\log n}$ magnitudes, otherwise we say that $i$ is \emph{active}.

    \item For each active $i\in [k]\setminus K_1$ and each $j\in N_1$, we assign a weight $w_{ij}$ to the entry $v_{ij}$ of $V$  as follows: If $v_{ij}=0$, define $w_{ij}=0$. If $v_{ij}\ne 0$, let $m$ be the magnitude of $v_{ij}$ and let $N_m(i)$ be the number of entries of $v_i|_{N_1}$ with magnitude $m$, and define $w_{ij}=10/\sqrt{N_m(i)}$.

    \item  For every $j\in N_1$, calculate the total weight $\sum_i w_{ij}$ in column $j$, where the sum is taken over all active $i\in [k]\setminus K_1$. If there exists some $j\in N_1$ where this total weight is larger than $(60\log n)^{-1/2}$, then move this index $j$ from $N_1$ to $N_2$ and remove all weights in column $j$ (if there exist multiple such $j$, choose one arbitrarily), and go to back to Step 2. If there is no such index $j\in N_1$, go to Step 6.

    \item Terminate and output the current sets $N_1, N_2, K_1$, output the current set of active row indices as $K_2$, and output the current set of inactive row indices as $K_3$. 
\end{enumerate}

It is clear that this algorithm must eventually terminate, since in every iteration a column index is moved from $N_1$ to $N_2$ (so the set $N_1$ is is getting strictly smaller with every iteration). It is also clear that the algorithm produces a partition $[k] = K_1\sqcup K_2\sqcup K_3$ of the row indices of $V$ and a partition $[n] = N_1\sqcup N_2$ of the column indices. Furthermore, it is not hard to see that these partitions satisfy conditions (1) and (3) in \cref{prop:decomposition}. Indeed, condition (1) is satisfied by the rule of moving row indices into $K_1$ in Step 2 of the algorithm, and condition (3) is satisfied by the definition of inactive rows in Step 3.

To show condition (2) in \cref{prop:decomposition}, consider the $|K_2|\times |N_1|$ matrix $V'$ obtained from $V[K_2\times N_1]$ by normalizing every row to have $\ell_2$-norm equal to $1$ (note that by the condition in Step 2, all the rows of $V[K_2\times N_1]$ are non-zero). Let us write $v_i'=(1/||v_i|_{N_1}||_2)v_i|_{N_1}$ for $i\in K_2$ for the rows of $V'$. Now, for every entry $v_{ij}\ne 0$ of $V[K_2\times N_1]$ with some magnitude $m$, we have $10^{m}\le |v_{ij}| < 10^{m+1}$ and for each $i\in K_2$ there are $N_m(i)$ such entries in $v_i|_{N_1}$. Thus, $||v_i|_{N_1}||_2\ge \sqrt{N_m(i)}\cdot 10^{m}$ for every magnitude $m$ appearing among the entries of $v_i|_{N_1}$, and we can conclude that
\[|v'_{ij}|=\frac{|v_{ij}|}{||v_i|_{N_1}||_2}< \frac{10^{m+1}}{\sqrt{N_m(i)}\cdot 10^{m}} = \frac{10}{\sqrt{N_m(i)}} = w_{ij}\]
for every entry $v_{ij}\ne 0$ of $V[K_2\times N_1]$ with magnitude $m$. This shows that $|v_{ij}'| \le w_{ij}$ for all $i\in K_2$ and $j\in N_1$ (note that in the case $v_{ij}= 0$, we trivially have $|v_{ij}'|=0 = w_{ij}$).

Now, to check condition (2), observe that for every $j\in N_1$ we have $\sum_{i\in K_2} w_{ij}\le (60\log n)^{-1/2}$ since otherwise the algorithm would not have terminated. Consequently, for every $j\in N_1$ we have $\sum_{i\in K_2} |v_{ij}'|\le \sum_{i\in K_2} w_{ij}\le (60\log n)^{-1/2}$, meaning that the $\ell_1$-norm of every column in $V'$ is at most $(60\log n)^{-1/2}$.

It remains to show $N_1\ne \emptyset$. Recall that at the start we have $N_1=[n]$, and in every iteration of the algorithm we move one index from $N_1$ into $N_2$. So we need to show that the number of column indices moved into $N_2$ throughout the algorithm is less than $n$. At every step, we consider the total weight in the matrix. Whenever we move a column index from $N_1$ into $N_2$, this total weight decreases by at least $(60\log n)^{-1/2}$ (by the condition on the column weight in Step 5 of the algorithm).

However, note after moving a column from $N_1$ to $N_2$, when reassigning the weights in Step 3 in the next iteration, some of the weights in the remaining columns with indices in $N_1$ may increase (indeed, the numbers $N_m(i)$ may decrease by removing a column from $N_1$). Furthermore, new row indices  may become active as the set $N_1$ gets smaller throughout the algorithm (but note that once a row index becomes active, it cannot become inactive anymore afterwards).

For every row index $i\in [k]$ that becomes active at some point during the algorithm, and every magnitude occurring among the entries of $v_i|_{N_1}$ at the moment when the index $i$ first becomes active, let $N^*_m(i)$ denote the number of entries of $v_i|_{N_1}$ with magnitude $m$ when the row index $i$ first becomes active. At that iteration of the algorithm, each of these $N^*_m(i)$ entries is assigned weight $10/\sqrt{N^*_m(i)}$. When a column index of some entry of $v_i|_{N_1}$ with magnitude $m$ gets moved into $N_2$ later in the algorithm, the quantity $N_m(i)$ decreases and so the weights of the remaining entries of $v_i|_{N_1}$ with magnitude $m$ increase. More precisely, when this happens for the $t$-th time (for $t\in \{1,\dots,N^*_m(i)-1\}$), the weight of the $N^*_m(i)-t$ remaining entries of $v_i|_{N_1}$ with magnitude $m$ is increased from $10/\sqrt{N^*_m(i)-t+1}$ to $10/\sqrt{N^*_m(i)-t}$. Thus, the total weight that is added to the entries with magnitude $m$ in row $i$ throughout the algorithm (both when row $i$ first becomes active and as the weights get re-assigned) is at most
\begin{align*}
    & N^*_m(i)\cdot \frac{10}{\sqrt{N^*_m(i)}} + \sum_{t = 1}^{N^*_m(i)-1} \lpr{N^*_m(i) - t} \lpr{\frac{10}{\sqrt{N^*_m(i) - t}} - \frac{10}{\sqrt{N^*_m(i) - t + 1}}}\\
     &=   10\sqrt{N^*_m(i)} + 10\sum_{t = 1}^{N^*_m(i) - 1} t \lpr{\frac{1}{\sqrt{t}} - \frac{1}{\sqrt{t+1}}} 
     =  10\sqrt{N^*_m(i)} + 10\sum_{t = 1}^{N^*_m(i) - 1} t \cdot \frac{\sqrt{t+1} - \sqrt{t}}{\sqrt{t(t+1)}} \\
     &\le  10\sqrt{N^*_m(i)} + 10\sum_{t = 1}^{N^*_m(i) - 1} \lpr{\sqrt{t+1} - \sqrt{t}} \le 10\sqrt{N^*_m(i)} +  10\sqrt{N^*_m(i)} =  20\sqrt{N^*_m(i)}
\end{align*}
for any row index $i$ becoming active at some point during the algorithm and any $m$ appearing as magnitude among the entries of $v_i|_{N_1}$ at the moment when the index $i$ first becomes active.

For notational convenience, let $S = \ceil{10\log n}-1$. Recall that when the row index $i$ becomes active, there are strictly less than $\ceil{10\log n}$ (i.e.\ at most $S$) magnitudes appearing among the entries of $v_i|_{N_1}$. For these at most $S$ different magnitudes $m$, the numbers $N^*_m(i)$ defined above (counting the entries of $v_i|_{N_1}$ of magnitude $m$ when the row index $i$ first becomes active) satisfy
\[\sum_m N^*_m(i) = |\supp(v_i|_{N_1})|\le |\supp(v_i)|\le 2k,\]
where the last inequality holds by \cref{lem:row-support}.
Hence
\[\sum_m 20 \sqrt{N_m(i)}= 20\sum_m  \sqrt{N_m(i)}\le 20 \cdot\sqrt{S} \cdot \sqrt{\sum_m N^*_m(i)}\le 20 \cdot\sqrt{S} \cdot\sqrt{2k} \le 30 \sqrt{kS}\]
by the Cauchy--Schwarz inequality, so for every row the total weight added to the entries throughout the algorithm is at most $30 \sqrt{kS}$. Thus, the total weight increase for the entire $k\times n$ matrix $V$ throughout the algorithm is at most
\[k\cdot 30 \sqrt{kS}=30k^{3/2}\sqrt{S}\le 30k^{3/2}\cdot (10\log n)^{1/2}.\]
Recall that whenever we move a column index from $N_1$ into $N_2$, the weight in the matrix $V$ decreases by at least $(60\log n)^{-1/2}$. Therefore, at most
\[\frac{30k^{3/2}\cdot (10\log n)^{1/2}}{(60\log n)^{-1/2}}\le 750k^{3/2}\cdot \log n\]
column indices can be moved from $N_1$ into $N_2$. By our assumption $k \le 10^{-2}\cdot n^{2/3}/(\log n)^{2/3}$, this number of column indices moved from $N_1$ into $N_2$ is at most
\[750k^{3/2}\cdot \log n\le 750\cdot 10^{-3}\cdot \frac{n}{\log n}\cdot \log n < n.\]
Thus, we have $N_1\ne \emptyset$ at the end of the algorithm, as desired.
\end{proof}

\printbibliography

@article{YehudaY21,
  title={A lower bound for essential covers of the cube},
  author={Gal Yehuda and Amir Yehudayoff},
  archivePrefix={arXiv},
  year={2021},
  eprint={2105.13615}
}

@article{LinialR05,
title = {Essential covers of the cube by hyperplanes},
journal = {Journal of Combinatorial Theory, Series A},
volume = {109},
number = {2},
pages = {331-338},
year = {2005},
doi = {10.1016/j.jcta.2004.07.012},
author = {Nathan Linial and Jaikumar Radhakrishnan}
}

@article{araujo2022,
      title={New Lower Bounds for Essential Covers of the Cube}, 
      author={Igor Araujo and József Balogh and Letícia Mattos},
      year={2022},
      eprint={2209.00140},
      archivePrefix={arXiv}
}

@article{klein2022slicing,
      title={Slicing all Edges of an $n$-cube Requires $n^{2/3}$ Hyperplanes}, 
      author={Ohad Klein},
      year={2022},
      eprint={2212.03328},
      archivePrefix={arXiv}
}

@article {bang1951,
    AUTHOR = {Bang, Th\o ger},
     TITLE = {A solution of the ``plank problem.''},
   JOURNAL = {Proc. Amer. Math. Soc.},
  FJOURNAL = {Proceedings of the American Mathematical Society},
    VOLUME = {2},
      YEAR = {1951},
     PAGES = {990--993},
   MRCLASS = {52.0X},
  MRNUMBER = {46672},
MRREVIEWER = {W.\ Gustin},
       DOI = {10.2307/2031721},
}

@article {ball1991,
    AUTHOR = {Ball, Keith},
     TITLE = {The plank problem for symmetric bodies},
   JOURNAL = {Invent. Math.},
  FJOURNAL = {Inventiones Mathematicae},
    VOLUME = {104},
      YEAR = {1991},
    NUMBER = {3},
     PAGES = {535--543},
   MRCLASS = {52A20 (52A21 52A40)},
  MRNUMBER = {1106748},
       DOI = {10.1007/BF01245089},
}

@article {hoeffding63,
    AUTHOR = {Hoeffding, Wassily},
     TITLE = {Probability inequalities for sums of bounded random variables},
   JOURNAL = {J. Amer. Statist. Assoc.},
  FJOURNAL = {Journal of the American Statistical Association},
    VOLUME = {58},
      YEAR = {1963},
     PAGES = {13--30},
   MRCLASS = {60.20},
  MRNUMBER = {144363},
MRREVIEWER = {E.\ H.\ Lehman},
DOI={10.2307/2282952}
}

@article{yehuda2021slicing,
      title={Slicing the hypercube is not easy}, 
      author={Gal Yehuda and Amir Yehudayoff},
      year={2021},
      eprint={2102.05536},
      archivePrefix={arXiv}
}

@incollection {alon99,
    AUTHOR = {Alon, Noga},
     TITLE = {Combinatorial {N}ullstellensatz},
      NOTE = {Recent trends in combinatorics (M\'{a}trah\'{a}za, 1995)},
   JOURNAL = {Combin. Probab. Comput.},
  FJOURNAL = {Combinatorics, Probability and Computing},
    VOLUME = {8},
      YEAR = {1999},
    NUMBER = {1-2},
     PAGES = {7--29},
   MRCLASS = {05-XX (05C15 12D10)},
  MRNUMBER = {1684621},
MRREVIEWER = {Theresa\ P.\ Vaughan},
       DOI = {10.1017/S0963548398003411},
}

@article {alon-furedi-93,
    AUTHOR = {Alon, Noga and F\"{u}redi, Zolt\'{a}n},
     TITLE = {Covering the cube by affine hyperplanes},
   JOURNAL = {European J. Combin.},
  FJOURNAL = {European Journal of Combinatorics},
    VOLUME = {14},
      YEAR = {1993},
    NUMBER = {2},
     PAGES = {79--83},
   MRCLASS = {52C17 (05B40)},
  MRNUMBER = {1206612},
MRREVIEWER = {Antonio\ Pasini},
       DOI = {10.1006/eujc.1993.1011},
}

@article {clifton-huang-20,
    AUTHOR = {Clifton, Alexander and Huang, Hao},
     TITLE = {On almost {$k$}-covers of hypercubes},
   JOURNAL = {Combinatorica},
  FJOURNAL = {Combinatorica. An International Journal on Combinatorics and
              the Theory of Computing},
    VOLUME = {40},
      YEAR = {2020},
    NUMBER = {4},
     PAGES = {511--526},
   MRCLASS = {52C17 (05B40 52A35)},
  MRNUMBER = {4150880},
MRREVIEWER = {Chuanming\ Zong},
       DOI = {10.1007/s00493-019-4221-y},
}

\end{document}